\theoremstyle{plain}
\newtheorem{theorem}{Theorem}[section]
\newtheorem{lemma}[theorem]{Lemma}
\newtheorem{claim}[theorem]{Claim}
\newtheorem{remark}[theorem]{Remark}
\newtheorem{definition}[theorem]{Definition}
\newcommand{\Bin}{\ensuremath{\textrm{Bin}}}
\newcommand{\dnp}{\ensuremath{D(n,p)}}
\newcommand{\gnp}{\ensuremath{G(n,p)}}
\newcommand{\ham}{\ensuremath{\mathcal H \mathcal A \mathcal M}}
\newcommand{\PMinDegall}{(P\ref{property:min-degree-all})}
\newcommand{\PMinDeg}{(Q\ref{property:min-degree})}
\newcommand{\PXEdges}{(P\ref{property:few-edges-X})}
\newcommand{\PXYEdges}{(P\ref{property:edges-X-Y})}
\newcommand{\PSize}{(Q\ref{property:size})}
\begin{document}

\thispagestyle{empty} 
\begin{center}
\LARGE Robust hamiltonicity of random directed graphs
\vspace{8mm}

\Large{

\begin{tabular}{ccc}
Asaf Ferber &   Rajko Nenadov &  Andreas Noever \\
{\small{asaf.ferber@inf.ethz.ch}} &
{\small{rnenadov@inf.ethz.ch}} &
{\small{anoever@inf.ethz.ch}}
\end{tabular} \vspace{3mm}

\begin{tabular}{cc}
   Ueli Peter &   Nemanja \v Skori\'c   \\
   {\small{upeter@inf.ethz.ch}} &
  {\small{nskoric@inf.ethz.ch}}
\end{tabular}
}
\vspace{5mm}

\large
  Institute of Theoretical Computer Science \\
  ETH Zurich, 8092 Zurich, Switzerland
\vspace{8mm}

\end{center}

\begin{abstract}
In his seminal paper from 1952 Dirac showed that the complete graph
on $n\geq 3$ vertices remains Hamiltonian even if we allow an
adversary to remove $\lfloor n/2\rfloor$ edges touching each vertex.
In 1960 Ghouila-Houri obtained an analogue statement for digraphs by
showing that every directed graph on $n\geq 3$ vertices with minimum
in- and out-degree at least $n/2$ contains a directed Hamilton
cycle. Both statements quantify the robustness of complete graphs
(digraphs) with respect to the property of containing a Hamilton
cycle.
\\
\indent
A natural way to generalize such results to arbitrary graphs
(digraphs) is using the notion of \emph{local resilience}. The local
resilience of a graph (digraph) $G$ with respect to a property
$\mathcal{P}$ is the maximum number $r$ such that $G$ has the
property $\mathcal{P}$ even if we allow an adversary to remove an
$r$-fraction of (in- and out-going) edges touching each vertex. The
theorems of Dirac and Ghouila-Houri state that the local resilience
of the complete graph and digraph with respect to Hamiltonicity is $1/2$. Recently, this
statements have been generalized to random settings.  Lee and
Sudakov (2012) proved that the local resilience of a random graph
with edge probability $p=\omega\left(\log n /n\right)$ with respect to Hamiltonicity
 is $1/2\pm o(1)$. For random
directed graphs, Hefetz, Steger and Sudakov (2014+) proved an
analogue statement, but only for edge probability
$p=\omega\left(\log n/\sqrt{n}\right)$. In this paper we
significantly improve their result to $p=\omega\left(\log^8 n/
n\right)$, which is optimal up to the polylogarithmic factor.

\end{abstract}
\newpage

\setcounter{page}{1}

\section{Introduction}
A \emph{Hamilton cycle} in a graph or a directed graph is a cycle
that passes through all the vertices of the graph exactly once, and
a graph is \emph{Hamiltonian} if it contains a Hamilton cycle.
Hamiltonicity is one of the central notions in graph theory,
and has been intensively studied by numerous researchers. It is well
known that the problem of whether a given graph contains a Hamilton
cycle is $\mathcal{NP}$-complete. In fact, Hamiltonicity was one of
Karp's 21 $\mathcal{NP}$-complete problems
\cite{karp1972reducibility}.

Since one can not hope for a general classification of Hamiltonian
graphs, as a consequence of Karp's result, there is a large interest in deriving properties that are
sufficient for Hamiltonicity. A classic result by Dirac from 1952
\cite{dirac1952some} states that every graph on $n\geq 3$ vertices
with minimum degree at least $n/2$ is Hamiltonian. This result is
tight as the complete bipartite graph with parts of sizes that
differ by one, $K_{m,m+1}$, is not Hamiltonian. Note that this
theorem answers the following question:  Starting with the complete
graph on $n$ vertices $K_n$, what is the maximal integer $\Delta$
such that for any subgraph $H$ of $K_n$ with maximum degree
$\Delta$, the graph $K_n-H$ obtained by deleting the edges of $H$
from $K_n$ is Hamiltonian? This question not only asks for a
sufficient condition for a graph to be Hamiltonian, it also asks for
a quantification for the ``local robustness" of the complete graph
with respect to Hamiltonicity.

A natural generalization of this question is to replace the complete graph with some arbitrary base graph.
Recently, questions of this type have drawn a lot of
attention under the notion of \emph{resilience}.

Roughly speaking, given a monotone increasing graph property $\mathcal P$ and a graph
or a digraph $G$ which satisfies $\mathcal P$, the {\em
resilience of $G$ with respect to $\mathcal P$} measures how much one must change $G$,
 in order to destroy $\mathcal P$.
Since one can destroy many natural properties by small changes (for
example, by isolating a vertex), it is natural to limit
the number of edges touching any vertex that one is allowed to
delete. This leads to the following definition of {\em local
resilience}.

\begin{definition}[Local resilience] \label{def:local}
Let $\mathcal P$ be a monotone increasing graph property. For a graph $G$, the local resilience is
\begin{align*}r(G, \mathcal{P}):=\min\{r:\; & \exists H\subseteq G \text{ such that } \\
& \forall_{v\in V(G)}  \,  d_H(v)\le r\cdot d_G(v)
 \text { and }\\  & G - H \text{ does not have $\mathcal P$}\},
\end{align*}
while for a digraph $G$ it is defined as
\begin{align*}
r(G, \mathcal{P}):= & \min\{r:\;  \exists H\subseteq G \text{ such that } 
 \forall v\in V(G) \\ 
 & d^+_H(v)\le r\cdot d^+_G(v)\text{ and } 
 d^-_H(v)\le r\cdot d^-_G(v) \\
 & \text {and } 
  G - H \text{ does not have $\mathcal P$}\}.
\end{align*}
\end{definition}
Sudakov and Vu initiated the systematic study of resilience of
random and pseudorandom graphs in \cite{SudakovVu}, and since then this
field has attracted substantial research interest (see e.g.\
\cite{BaloghEtAl,ben2011local,ben2011resilience,bottcher2009almost,FriezeKrivelevich,krivelevich2010resilient,lee2012dirac}).

Let us denote with $\ham$ the graph property of containing a Hamiltonian cycle (directed, in case of digraphs). Lee and Sudakov \cite{lee2012dirac} proved
that for $p=\omega\left(\log n/n\right)$, a
typical $G\sim \gnp$ satisfies $r(G, \ham) \in (1/2 \pm o(1))$. Note that this result is
asymptotically optimal not only with respect to the constant $1/2$
but also with respect to the probability $p$, since it is well known
that a typical graph $G\sim \gnp$ is not Hamiltonian for $p=o(\log
n/n)$ (see \cite{bollobas1998random}).

For a positive integer $n$ and $0\leq p=p(n)\leq 1$, let $\dnp$
denote the binomial probability space of random digraphs on the set
of vertices $[n]=\{1,\ldots,n\}$. That is, an element $D \sim \dnp$
is generated by including each of the $n(n-1)$ possible ordered
pairs of $[n]$ with probability $p$, independently at random. For this model, Frieze
\cite{frieze1988algorithm} showed that a typical digraph $D\sim
\dnp$ is Hamiltonian for $p\geq (\log n+ \omega(1))/n$. Therefore, it is natural to ask for an analogue to the result of Lee and
Sudakov \cite{lee2012dirac} for random digraphs with these
densities.

As a first step towards this goal, Hefetz, Steger and Sudakov proved
in \cite{hefetz2014random} the following theorem, which is
asymptotically optimal with respect to the resilience but far from optimal with respect to the edge probability.

\begin{theorem}[\cite{hefetz2014random}] \label{HSS}
Let $\beta>0$, let $n$ be a sufficiently large integer and let
$p=\omega\left(\log n/\sqrt{n}\right)$. Then w.h.p.\ a digraph $G\sim
\dnp$ satisfies $r(G, \ham) \in (1/2 \pm \beta)$.
\end{theorem}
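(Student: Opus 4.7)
The plan is to prove both inequalities $r(G,\ham) \le 1/2 + \beta$ and $r(G,\ham) \ge 1/2 - \beta$ separately, the first by exhibiting an explicit adversarial $H$, the second by showing that every permissible $H$ leaves a Hamiltonian digraph w.h.p.

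For the upper bound I would pick any partition $V = A \sqcup B$ with $|A| = \lfloor n/2 \rfloor + 1$ and $|B| = \lceil n/2 \rceil - 1$ and let $H$ consist of all edges of $G$ from $A$ to $A$. In $G - H$ every out-edge leaving $A$ lands in $B$, so a directed Hamilton cycle would require at least $|A|$ edges from $A$ to $B$; but each vertex of $B$ can receive only one such edge, and $|B| < |A|$, contradiction. Chernoff shows that w.h.p.\ every $v \in A$ has $d_G^+(v,A) = (1/2+o(1))\,d_G^+(v)$ and similarly for $d_G^-$, while vertices in $B$ lose nothing. So $H$ is a valid witness for $r(G,\ham) \le 1/2 + o(1)$.

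For the lower bound I would first show that w.h.p.\ $G \sim D(n,p)$ satisfies three quasirandomness properties: (P1) every vertex has $d^\pm(v) = (1+o(1))np$; (P2) for any two sets $U, W$ with $|U|,|W| \ge \gamma n$, the number of edges from $U$ to $W$ in $G$ is $(1\pm o(1))|U||W|p$; and (P3) a joint-neighborhood concentration, i.e., for every vertex $v$ and every set $S$ with $|S| \ge \gamma n$, the number of vertices $u \in S$ with $v \to u$ and $u \to w$ for a prescribed $w$ is strongly concentrated around $|S|p^2$. The hypothesis $p = \omega(\log n/\sqrt n)$ is exactly what makes (P3) hold by a union-bound-Chernoff calculation, since the relevant parameter is $np^2 = \omega(\log^2 n)$. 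Now fix any candidate adversary $H \subseteq G$ satisfying $d^\pm_H(v) \le (1/2 - \beta)\,d_G^\pm(v)$ and set $G' := G - H$. Properties (P1)--(P3) transfer to $G'$ with constants degraded only by a factor depending on $\beta$: in particular $\delta^\pm(G') \ge (1/2 + \beta/2)np$, and any two linear-sized sets are still joined by $\Omega(n^2 p)$ edges.

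To build a directed Hamilton cycle in $G'$ I would use a Pósa-type rotation-extension tailored to digraphs. Start with a longest directed path $P = v_1 \cdots v_\ell$. Its right endpoint $v_\ell$ has $\ge (1/2+\beta/2)np$ out-neighbours on $P$; pick one, say $v_{i+1}$, and perform a \emph{forward rotation} that turns $P$ into the path $v_1 \cdots v_i v_{i+1} v_\ell v_{\ell-1} \cdots v_{i+2}$\,---\,here the danger is that the reversed segment $v_\ell\cdots v_{i+2}$ reverses orientations, so one actually works with a variant that only uses the edge $v_\ell \to v_{i+1}$ together with an edge $v_{i+2} \to v_{i+3}$ already present in the original path, allowing us to re-splice without reversing. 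Running this forward process independently of a dual backward-rotation process at $v_1$, I would use the expansion from (P2) and (P3) to argue that after $O(\log n)$ generations both endpoint-sets have size $\Omega(n)$; then (P2) applied to these two sets yields a closing edge, producing a directed cycle $C$ of length $\ell$. If $\ell < n$, property (P3) guarantees, for every vertex $u \notin C$, an edge $(x,y) \in C$ with $x \to u$ and $u \to y$ in $G'$, allowing us to insert $u$ into $C$; iterate until $|C| = n$.

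The main obstacle is the rotation step: in the directed setting a naive Pósa rotation reverses the orientation of an entire subpath, so one has to restrict attention to rotation moves that preserve directions. Proving that, despite this restriction, the rotation process still generates $\Omega(n)$ distinct endpoints and does so robustly against the loss of a $(1/2-\beta)$-fraction of each vertex's in- and out-edges is the technical heart of the argument; the density requirement $p = \omega(\log n/\sqrt n)$ enters here because the pool of candidate pivots $v_i$ for an allowed rotation is controlled by second-moment quantities of order $np^2$, and one needs these concentrated simultaneously over exponentially many path configurations via a union bound.
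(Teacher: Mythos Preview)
This theorem is not proved in the present paper at all: it is quoted from Hefetz--Steger--Sudakov~\cite{hefetz2014random}, and the only information the paper gives about its proof is that it ``extensively used the Regularity Lemma and the fact that for $p=\omega(\log n/\sqrt{n})$, a typical digraph $G\sim D(n,p)$ contains `many' transitive triangles touching each vertex.'' So there is no in-paper proof to compare with, but your plan is visibly different from the described approach (Regularity Lemma plus a triangle-based absorption/insertion scheme, not rotation--extension).

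More importantly, your lower-bound argument has a real gap at its core. The paper explicitly notes that ``the Pos\'a rotation--extension technique \ldots is, in its simplest form, not applicable to directed graphs,'' and your proposal does not get past this. The ``forward rotation'' you write down, $v_1\cdots v_i\,v_{i+1}\,v_\ell\,v_{\ell-1}\cdots v_{i+2}$, is not a directed path: it requires the arc $v_{i+1}\to v_\ell$ (not the arc $v_\ell\to v_{i+1}$ you said you found), and in any case the tail $v_\ell v_{\ell-1}\cdots v_{i+2}$ traverses the original path backwards. Your fallback sentence about ``a variant that only uses the edge $v_\ell\to v_{i+1}$ together with an edge $v_{i+2}\to v_{i+3}$ already present in the original path'' does not describe any valid re-splicing; no direction-preserving rotation is actually defined. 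You correctly flag this as ``the main obstacle,'' but you do not resolve it, and without a working rotation step the whole endpoint-expansion and cycle-closing machinery never gets off the ground.

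Your insertion step (find a cycle arc $(x,y)$ with $x\to u\to y$) is in fact the transitive-triangle gadget that HSS use, and your observation that $np^2=\omega(\log^2 n)$ is what makes it concentrate is exactly the reason for the threshold $p=\omega(\log n/\sqrt{n})$. But HSS combine this gadget with the diregularity lemma to build the long structure, not with rotations; your outline would need a genuinely new idea (or a switch to the regularity-based framework) to replace the broken rotation step.
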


In the proof of Theorem \ref{HSS} Hefetz, Steger and Sudakov
extensively used the Regularity Lemma and the fact that for
$p=\omega\left(\log n/\sqrt{n}\right)$, a typical digraph $G\sim
\dnp$ contains ``many" transitive triangles touching each vertex.
Therefore, generalizing it to smaller values of $p$ would at least require to replace triangles with some sparser gadgets.

In general, problems related to Hamilton cycles in digraphs are
known to be much harder than their counterparts in the undirected
setting, mainly since
the Pos\'a rotation-extension technique (see \cite{posa1976hamiltonian}) is,  
in its simplest form, not applicable to directed graphs.

In this paper we use the  absorbing method, initiated by R{\"o}dl, Ruci{\'n}ski and Szemer{\'e}di \cite{RodlRucinskiSzemeredi2008Absorbing},   combined with a
very nice and recent embedding argument of Montgomery \cite{montgomery2014embedding} to prove the
following theorem, which is optimal up to polylogarithmic factors.

\begin{theorem} \label{thm:main}
Let $\beta>0$, let $n$ be a sufficiently large integer and let
$p=\omega\left(\log ^8 n/ n\right)$. Then w.h.p.\ a digraph $G\sim
\dnp$ satisfies $r(G, \ham) \in (1/2 \pm \beta)$.
\end{theorem}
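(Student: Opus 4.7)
My plan is to use the absorbing method of R\"odl--Ruci\'nski--Szemer\'edi adapted to directed graphs, combined with Montgomery's embedding technique. Fix any adversarial $H \subseteq G$ with $d^\pm_H(v) \le (1/2-\beta)d^\pm_G(v)$ for every $v$, and write $G':= G - H$. Standard Chernoff bounds show that w.h.p.\ every vertex of $G \sim \dnp$ has $d^\pm_G(v) = (1 \pm o(1))np$, so w.h.p.\ the minimum in- and out-degree of $G'$ is at least $(1/2+\beta/2 - o(1))np = \omega(\log^8 n)$. The goal is to produce a directed Hamilton cycle in any such $G'$.

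\textbf{Pseudorandom properties that survive the adversary.} First I would collect a short list of properties that a typical $G \sim \dnp$ enjoys and that pass to any $G'$ of the above form (with slightly worse constants): concentration of directed edge counts between disjoint sets $A,B$ with $|A||B|p \gg \log n$; good in- and out-neighbourhood expansion of all sufficiently small vertex subsets; and, most importantly, a \emph{connectability} property stating that for any two vertices $u \neq v$ and any "forbidden" set $F$ of size $o(n)$ there are many short directed $u \to v$ paths in $G'$ avoiding $F$. This last property plays the role of the P\'osa rotation-extension technique in the directed setting and is the tool I would use throughout to splice pieces together.

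\textbf{The absorbing path.} The heart of the proof is to build a short directed path $P_{\mathrm{abs}}$ in $G'$ of length $o(n)$, together with a reservoir $R \subseteq V$ of size $\Theta(n/\mathrm{polylog}(n))$, so that for every $R' \subseteq R$ with $|R'|$ sufficiently small, there is a directed path in $G'$ on the vertex set $V(P_{\mathrm{abs}}) \cup R'$ having the same endpoints as $P_{\mathrm{abs}}$. The natural building block is a local absorber for a vertex $v$: a short gadget containing two internally disjoint directed $a \to b$ paths, one passing through $v$ and one bypassing it, so that $v$ can be locally inserted into the cycle whenever the bypassing path is replaced by the through-$v$ path. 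At $p = \omega(\log^8 n/n)$ every vertex still possesses many such gadgets in $G'$ by a second-moment / Janson-type computation on the adversary-reduced edge set. Montgomery's tree-embedding argument then allows these gadgets to be strung along a single long directed path, using the connectability property above to make the in-between connections. This stage dictates the polylogarithmic factor $\log^8 n$ in the density.

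\textbf{Finishing, and the main obstacle.} With $P_{\mathrm{abs}}$ and $R$ in hand, I would cover $V \setminus V(P_{\mathrm{abs}})$ by a small number $k = o(|R|)$ of vertex-disjoint directed paths using a greedy / DFS argument exploiting the minimum semi-degree and expansion of $G'$, then apply connectability, using only interior vertices from $R$, to splice $P_{\mathrm{abs}}$ together with these paths into a single directed cycle covering $V \setminus R'$ for some small $R' \subseteq R$. The absorbing property of $P_{\mathrm{abs}}$ then swallows $R'$ and yields the Hamilton cycle. The hardest step is the construction of $P_{\mathrm{abs}}$: the directed absorber gadget is much rarer than the triangle-based gadgets used by Hefetz--Steger--Sudakov at the larger density $\omega(\log n / \sqrt{n})$, so the Montgomery embedding must be executed inside $G'$ after the adversary has already deleted edges, meaning every edge-existence claim has to be verified from pseudorandom properties of $G'$ rather than from the full independence of $G$.
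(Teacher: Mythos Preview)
Your high-level architecture matches the paper's: isolate deterministic pseudorandom properties of $G'=G-H$, build an absorbing path, cover the remaining vertices by few vertex-disjoint directed paths, connect everything through a reservoir via a Connecting Lemma based on Montgomery's argument, and finally absorb the leftover reservoir vertices. The covering and splicing steps are carried out in the paper essentially as you describe (the cover is obtained by slicing the non-absorber vertices into equal parts and taking perfect matchings between consecutive slices, which follows from the degree and edge-count properties alone).

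The genuine gap is in the absorber construction. You propose to exhibit, for each $v$, many short absorber gadgets in $G'$ by a second-moment/Janson count. This cannot work at $p=\omega(\log^8 n/n)$: any absorber contains two $a\to b$ walks and hence at least as many arcs as vertices, so for a gadget on a bounded number $k$ of vertices the expected count through a fixed $v$ even in $\dnp$ is $O(n^{k-1}p^{k})=O((np)^{k-1}p)=o(1)$ whenever $p=n^{-1+o(1)}$; moreover Janson applies to $G$, not to the adversarially thinned $G'$, and the additive pseudorandom properties you can salvage (degrees, edge counts between large sets) do not certify the presence of any fixed bounded subgraph. The paper's fix is to use Montgomery's technique not merely to string gadgets together but to \emph{construct} them. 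The absorber $A_x$ has $\Theta(\log^2 n)$ vertices: a cycle of length $\approx 12\log n$ through $x$ with a prescribed orientation pattern $\sigma$, plus $\approx 6\log n$ disjoint directed paths of length $\approx 10\log n$ between designated pairs on that cycle; both the cycles (one per $x$ in the absorbable set $V_1$ of size $n/\log^3 n$) and the internal paths are produced simultaneously and disjointly by the Connecting Lemma inside linear-size sets $V_2,V_3$. Because each $x\in V_1$ receives a dedicated absorber, the absorbing path can swallow \emph{any} subset of $V_1$, not only a small one, which is exactly what the final step requires. You already have the right tool---your connectability property is the Connecting Lemma---the missing idea is that it has to be invoked to build the absorbers themselves, not just to link them.
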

We want to remark that our proof can easily be turned into a simple and efficient randomized algorithm which finds a Hamilton cycle in a digraph with certain pseudorandom properties.

The paper is organized as follows. In Section \ref{sec:tools} we present auxiliary lemmas which are used throughout the paper. In Section \ref{sec:main_result} we give the definition of \emph{$(n, \alpha, p)$-pseudorandom} digraphs and state our main result (Theorem \ref{thm:main_pseudo}) concerning the Hamiltonicity of such digraphs. We then show how it implies Theorem \ref{thm:main} and furthermore derive the proof of Theorem \ref{thm:main_pseudo} using Connecting and Absorbing lemmas. 
In Section \ref{sec:connecting_lemma} we then give a proof of Connecting Lemma, and finally in Section \ref{sec:absorbing_lemma} we furthermore use it to prove Absorbing Lemma.

\subsection{Notation and definitions}

For an integer $n$, let $[n] =\{1, \ldots, n\}$ and for $a, b, c \in
\mathbb R$, let $(a\pm b)c=((a-b)c,(a+b)c)$.

Our graph theoretic notation is standard and follows that of
\cite{West}. In particular we use the following: Given a digraph $D$
we denote by $V(D)$ and $E(D)$ the sets of vertices and arcs of $D$,
respectively, and denote $v(D) = |V(D)|$ and $e(D) = |E(D)|$.
For a subset $S \subseteq V(D)$, we denote with $D[S]$ the subgraph
of $D$ induced by $S$. For two (not necessarily disjoint) subsets $X, Y \subseteq V(D)$, set $E_D(X,Y) := \{ (x, y)
\in E(D) : x \in X \text{ and } y \in Y \}$ and let $e_D(X, Y) =
|E_D(X,Y)|$. Furthermore, let $N^+_D(X,Y) = \{ y \in Y : x \in
X \text { and } (x,y) \in E(D) \}$ denote the set of all
out-neighbors of $X$ in $Y$ and let $N^-_D(X,Y) = \{ y \in Y : x \in
X \text { and } (y,x) \in E(D) \}$ denote the set of all
in-neighbors of $X$ in $Y$. 
Given a vertex $x \in V(D)$ and $\tau \in \{+,-\}$, we abbreviate $N^\tau_D(\{x\}, Y)$ to $N^\tau_D(x, Y)$ and
 define $d^{\tau}_D(x,Y)=|N_D^{\tau}(x,Y)|$ and $d_D^{\pm}(x, Y) = \min\{ d_D^+(x, Y), d_D^-(x, Y) \}$.
We omit the subscript $D$ whenever there is no risk of confusion.

For $\tau \in \{+, -\}$ we denote with $\bar \tau$ the opposite sign. Furthermore, for  $\sigma \in \{+,
-\}^{\ell}$ and $i \in [\ell]$, let $\sigma(i)$ denote $i$-th member
of the $\ell$-tuple $\sigma$, let $\sigma^i = (\sigma(1), \ldots, \sigma(i))$  
and let $\bar \sigma$ denote $(\bar \sigma(\ell), \ldots,  \bar \sigma(1))$.  
We call a sequence of vertices 
$P = v_1,\ldots,v_{\ell+1}$ a $\sigma$-walk if all the vertices are different, except that  $v_0$ and $v_{\ell + 1}$ can be the same vertex,
and if $v_{i+1} \in N^{\sigma(i)}(v_i)$ for all $1 \leq i \le \ell$. Moreover, we say that $P$ connects $v_1$ to $v_{\ell + 1}$ and call $v_1$ and $v_{\ell +1}$ its left and right endpoint, respectively. The $\sigma$-walk $P$ is additionally called an \emph{$v_1v_{\ell+1}$-path} if $v_1 \neq v_{\ell +1 }$ and  $\sigma(i) = +$, for all $1 \le i \le \ell$.


\section{Tools and preliminaries}
\label{sec:tools}

In this section we introduce tools used in the proofs of our results.


\subsection{Probabilistic tools}

We need to employ standard bounds on large deviations of random
variables. We mostly use the following well-known bound on the lower
and the upper tails of the Binomial distribution due to Chernoff
(see \cite{alon2004probabilistic}, \cite{janson2011random}).

\begin{lemma}\label{Che}
Let $X \sim \emph{\text{Bin}}(n,p)$ and let $\mu=\mathbb{E}(X)$.
Then
\begin{itemize}
    \item $\Pr\left[X<(1-a)\mu\right]<e^{-a^2\mu/2}$ for every
    $a>0$;
    \item $\Pr\left[X>(1+a)\mu\right]<e^{-a^2\mu/3}$ for every $0<a<3/2.$
\end{itemize}
\end{lemma}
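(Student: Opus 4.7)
The plan is to apply the standard Chernoff bounding technique: exponentiate, use Markov's inequality, evaluate the moment generating function of a binomial, and then optimize the free parameter.

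First I would write $X = \sum_{i=1}^n X_i$ where the $X_i$ are independent Bernoulli$(p)$, so for any real $t$ we have $\mathbb{E}[e^{tX}] = (1-p+pe^t)^n$. Using the elementary inequality $1+x \le e^x$, this is at most $e^{np(e^t-1)} = e^{\mu(e^t-1)}$. For the upper tail I would take $t>0$ and apply Markov's inequality to $e^{tX}$:
\begin{equation*}
\Pr[X > (1+a)\mu] = \Pr[e^{tX} > e^{t(1+a)\mu}] \le e^{\mu(e^t-1) - t(1+a)\mu}.
\end{equation*}
Optimizing in $t$ yields $t = \ln(1+a)$ and the bound $\Pr[X>(1+a)\mu] \le \bigl(e^{a}/(1+a)^{1+a}\bigr)^\mu$. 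For the lower tail I would instead take $t < 0$ and argue symmetrically, obtaining $\Pr[X<(1-a)\mu] \le \bigl(e^{-a}/(1-a)^{1-a}\bigr)^\mu$.

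It then remains to verify the two elementary inequalities
\begin{equation*}
(1+a)\ln(1+a) - a \ge a^2/3 \quad \text{for } 0<a<3/2,
\end{equation*}
and
\begin{equation*}
(1-a)\ln(1-a) + a \ge a^2/2 \quad \text{for } a>0.
\end{equation*}
Both can be handled by considering $f(a) = (1+a)\ln(1+a) - a - a^2/3$ (respectively its lower-tail analogue), checking $f(0)=0$, computing $f'(a) = \ln(1+a) - 2a/3$, and showing via a further derivative that $f' \ge 0$ on the relevant interval; for the lower tail the Taylor series of $\ln(1-a)$ gives the bound immediately since all error terms have the right sign.

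The main ``obstacle'' here is really just bookkeeping: the exponential-moment step is a one-liner, and the only place where care is needed is in pinning down the range $0<a<3/2$ in the upper-tail inequality, which is why the constant in the denominator of the exponent worsens from $2$ to $3$. Since Lemma~\ref{Che} is a textbook statement for which the authors explicitly cite \cite{alon2004probabilistic,janson2011random}, I would in practice invoke it without reproducing the computation.
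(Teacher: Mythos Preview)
Your proposal is correct and matches the standard argument; the paper itself gives no proof of Lemma~\ref{Che} at all, simply citing \cite{alon2004probabilistic,janson2011random}, which is exactly what you anticipated in your last sentence. One small imprecision: in the upper-tail step your claim that $f'(a)=\ln(1+a)-2a/3\ge 0$ on all of $(0,3/2)$ is not quite true (it turns negative before $a=3/2$), but $f$ itself remains nonnegative on $[0,3/2]$ since $f(0)=0$, $f$ first increases, and one checks directly that $f(3/2)>0$; this is easy to patch and does not affect the conclusion.
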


\begin{remark}\label{hypergeometricChernoff} The conclusions of Lemma \ref{Che} remain the same when $X$
has the hypergeometric distribution (see \cite{janson2011random},
Theorem 2.10).
\end{remark}

\noindent The following is a trivial yet useful bound.
\begin{lemma}\label{Che2}
Let $X \sim \emph{\Bin}(n,p)$ and $k \in \mathbb{N}$.Then the
following holds: $$\Pr(X\geq k) \leq \left(\frac{enp}{k}\right)^k.$$
\end{lemma}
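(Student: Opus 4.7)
The plan is to obtain the bound by combining a union bound with a standard estimate for binomial coefficients. Writing $X = X_1 + \cdots + X_n$ as a sum of independent Bernoulli$(p)$ indicators, the event $\{X \geq k\}$ occurs precisely when there exists a $k$-subset $S \subseteq [n]$ such that $X_i = 1$ for every $i \in S$. A union bound over the $\binom{n}{k}$ choices of $S$ therefore yields
\[
\Pr(X \geq k) \;\leq\; \sum_{\substack{S \subseteq [n] \\ |S| = k}} \Pr(X_i = 1 \text{ for all } i \in S) \;=\; \binom{n}{k} p^k.
\]

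The remaining step is to replace $\binom{n}{k}$ by a cleaner expression. I would invoke the elementary inequality $\binom{n}{k} \leq (en/k)^k$, which follows from $\binom{n}{k} \leq n^k/k!$ together with $k! \geq (k/e)^k$; the latter is immediate from comparing one term of the Taylor series $e^k = \sum_{j \geq 0} k^j / j! \geq k^k/k!$. Substituting this into the display above gives the target bound $(enp/k)^k$.

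Since the lemma is a classical and purely elementary inequality, there is essentially no obstacle: the proof is a single short calculation combining a union bound with a standard estimate for $\binom{n}{k}$, with no probabilistic subtlety beyond independence of the Bernoulli trials.
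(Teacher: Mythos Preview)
Your proof is correct and follows essentially the same approach as the paper, which simply writes $\Pr(X \geq k) \leq \binom{n}{k} p^k \leq (enp/k)^k$ in one line. You have merely spelled out the union-bound reasoning and the justification of $\binom{n}{k} \leq (en/k)^k$ in more detail.
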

\begin{proof}
$\Pr(X \geq k) \leq \binom{n}{k}p^k \leq
\left(\frac{enp}{k}\right)^k$.
\end{proof}

\subsection{Graph Partitioning}

The next lemma states that one can partition a digraph into subsets which, proportionally, inherit the lower bound on the in- and out-degree.

\begin{lemma} \label{lemma:random-partition-good-degrees} Let
$c, \varepsilon>0$ be constants, $n$ sufficiently large integer and $0<p := p(n) <1$. Suppose that:
\begin{enumerate}[$(i)$]
\item $D$ is a digraph on $n$ vertices,
\item $U\subseteq V(D)$,
\item $k, s_1,\ldots,s_k \in [n]$ are integers such that \\ $s_i \geq \frac{\log^{1.1} n}{p}$ and  $\sum_{i}s_i\leq |U|$.
\end{enumerate}
Then, there exist disjoint subsets $S_1, \ldots, S_k \subseteq U$
such that the following holds for every $1\leq i\leq k$:
\begin{enumerate}[$(a)$]
\item $|S_i|=s_i $, and
\item for every $v\in V(D)$, if  $d_D^{\pm}(v, U) \geq c p |U|$
then
\begin{equation}
d_D^{\pm}(v, S_i) \geq (1-\varepsilon) c p s_i.
\label{eq:claim-deg-b}
\end{equation}
\end{enumerate}
\end{lemma}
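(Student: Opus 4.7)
The plan is to select the sets $S_1, \ldots, S_k$ by a single uniformly random procedure and then show that the stated degree bounds hold simultaneously for all vertices $v$ and all indices $i$ with probability at least $1 - o(1)$, which in particular guarantees the existence of a partition as required.

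More precisely, I would pick a uniformly random ordering of the vertices of $U$ and let $S_1$ consist of the first $s_1$ vertices, $S_2$ of the next $s_2$, and so on (this is possible since $\sum_i s_i \le |U|$). Fix any vertex $v \in V(D)$ with $d^{+}_D(v,U) \ge cp|U|$ and any index $i \in [k]$. Then $d^{+}_D(v, S_i) = |N^+_D(v) \cap S_i|$ has the hypergeometric distribution with population size $|U|$, number of ``good'' elements $d^{+}_D(v,U)$, and sample size $s_i$. In particular its expectation satisfies
\begin{equation*}
\mathbb{E}\bigl[d^{+}_D(v,S_i)\bigr] \;=\; s_i \cdot \frac{d^{+}_D(v,U)}{|U|} \;\ge\; cps_i.
\end{equation*}

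By Lemma \ref{Che} together with Remark \ref{hypergeometricChernoff}, the lower-tail Chernoff bound applies to the hypergeometric distribution, giving
\begin{equation*}
\Pr\bigl[\,d^{+}_D(v,S_i) < (1-\varepsilon)cps_i\,\bigr] \;\le\; \exp\!\left(-\frac{\varepsilon^2 \, cps_i}{2}\right) \;\le\; \exp\!\left(-\frac{\varepsilon^2 c}{2}\log^{1.1} n\right),
\end{equation*}
where I used the hypothesis $s_i \ge \log^{1.1} n / p$. For $n$ large, this is much smaller than $n^{-3}$. An identical argument handles the in-degree using $d^{-}_D$. A union bound over the at most $n$ vertices $v$ of $V(D)$, the at most $n$ indices $i$, and the two choices of sign $\tau \in \{+,-\}$ then shows that with probability $1 - o(1)$ the inequality \eqref{eq:claim-deg-b} holds simultaneously for all relevant $v$ and $i$. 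Since each $|S_i| = s_i$ holds deterministically by construction, a realization meeting both conditions exists, proving the lemma.

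The only thing to check carefully is the concentration step: one must confirm that $s_i \ge \log^{1.1}n /p$ is enough to beat the $n^{O(1)}$ union bound, which it is since $\log^{1.1} n \gg \log n$. There is no real obstacle beyond correctly identifying the hypergeometric distribution and invoking the Chernoff-type bound from Remark \ref{hypergeometricChernoff}; the $\log^{1.1} n$ exponent in the hypothesis is tailored exactly so that this union bound succeeds.
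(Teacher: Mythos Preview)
Your proof is correct and follows essentially the same approach as the paper: choose the parts uniformly at random (the paper phrases it as a random partition, you as a random ordering, but these are equivalent), observe that each $d^\tau_D(v,S_i)$ is hypergeometric with mean at least $cps_i \ge c\log^{1.1}n$, apply the Chernoff-type bound from Remark~\ref{hypergeometricChernoff}, and take a union bound over at most $n$ vertices and at most $n$ parts. There is nothing to add.
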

\begin{proof} We prove the lemma only for $d^{+}(v, S_i) \geq (1-\varepsilon) c p s_i$ as the proof for
$d^{-}(v, S_i) \geq (1-\varepsilon) c p s_i$ follows in similar
fashion. Let $U = S_1 \cup \ldots \cup S_k \cup Z$ be a partition of
$U$ taken uniformly at random from all partitions for which $S_i =
s_i$ for every $1 \leq i \leq k$ and the ``leftover" set $Z$ is of
size $|Z| = |U| - \sum_{i=1}^k s_i$. Let $W:= \{v \in V(D) \mid
d^{+}(v, U) \geq
 c p |U| \}$ and let $v \in W$ be an arbitrary vertex from $W$.
The number of out-neighbors of $v$ in $S_i$ is hypergeometrical
distributed, thus we have
$$
\mathbb{E}[ d^{+}(v, S_i)] = d^{+}(v, U) \frac{s_i}{|U|} \geq c
\log^{1.1} n.
$$

Using this and Lemma \ref{Che} we obtain the following upper bound
$$
\Pr \left[d^{+}(v,S_i)\leq (1-\varepsilon)d^+(v, U) \frac{s_i}{|U|}
\right] \leq e^{-\varepsilon^2 c \log^{1.1}n / 2}.
$$
The set $W$ has at most $n$ vertices and  the size of each part
$S_i$ is a positive integer, thus the number of parts $k$ is at most
$n$. Taking the union bound over all parts $S_1, \ldots S_k$ and all
vertices in $W$ we get
\begin{align*}
\Pr  \left [ \exists v \in W \; \exists i \in [k], \,
d^{+}(v,S_i)\leq (1-\varepsilon)d^+(v, U) \frac{s_i}{|U|} \right ] 
 \leq n^2 \cdot e^{-\varepsilon^2 c \log^{1.1} n / 2} = o(1),
\end{align*}
which completes the proof.
\end{proof}

\section{Proof of Theorem \ref{thm:main}}
\label{sec:main_result}

In this section we introduce the definition of an $(n, \alpha, p)$-pseudorandom digraph, which will be the main object of study thoughout the paper. In fact, we prove that for $p = \omega(\frac{\log^8 n }n)$ and any positive constant $\alpha$, an $(n, \alpha, p)$-pseudorandom digraph contains a directed Hamiltonian cycle. Using this result the main theorem follows from a fact that after deleting at most $(1/2 - \beta)$ fraction of the edges from each vertex of $\dnp$ the remaining digraph is w.h.p. $(n, \alpha, p)$-pseudorandom, for some positive constant $\alpha < \beta$.

\begin{definition} \label{definition:pseudorandom} A directed graph $D$ on $n$ vertices is called
\emph{$(n,\alpha,p)$-pseudorandom}
if the following holds:
\begin{enumerate}[\bfseries (P1)]
\item for every $v \in V(D)$ we have \label{property:min-degree-all}
$$ d^{\pm}_D(v, V(D)) \ge (1/2 + 2\alpha)n p,$$ 
\item for every subset $X\subseteq V(D)$ of size $|X|\leq \frac{\log^2 n}{p}$, we have \label{property:few-edges-X}
$$e_D(X)\leq |X|\log^{2.1} n,$$
\item for every two disjoint subsets $X,Y\subseteq V(D)$ of sizes
$|X|,|Y|\geq \frac{\log^{1.1} n}{p}$, we have
\label{property:edges-X-Y}
\begin{align*}
&e_D(X,Y)\leq (1 + \alpha/2)|X||Y|p.
\end{align*}
\end{enumerate}
\end{definition}
Intuitively, we require from an $(n, \alpha, p)$-pseudorandom digraph a certain lower bound on the minimum degree and that it contains no dense subgraph. As it turns out, these properties are sufficient  for containing a directed Hamiltonian cycle.

\begin{theorem} \label{thm:main_pseudo}
Let $\alpha > 0$ be a constant and $n$ sufficiently large integer. Then for $p = \omega(\frac{\log^8 n}{n})$, every $(n, \alpha, p)$-pseudorandom digraph is Hamiltonian.
\end{theorem}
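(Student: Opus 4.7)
The plan is to follow the absorbing method of R\"odl, Ruci\'nski and Szemer\'edi, in a form tailored to sparse (pseudo)random objects. I would begin by setting aside a small \emph{reservoir} $R\subseteq V(D)$ of size roughly $n/\log^{2} n$ via a random partition, invoking Lemma~\ref{lemma:random-partition-good-degrees} to guarantee that every vertex still has in- and out-degree at least $(1/2+\alpha)p|R|$ into $R$, and similarly into $V(D)\setminus R$. A second application of the same lemma earmarks a further polylogarithmic set $V_{\text{abs}}$, inside which the (yet-to-be-stated) Absorbing Lemma produces a directed path $P_{\text{abs}}$ with prescribed endpoints $x,y$ such that for any subset $Z\subseteq V(D)\setminus V(P_{\text{abs}})$ below some threshold size, there exists an $xy$-path that covers exactly $V(P_{\text{abs}})\cup Z$.

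Next I would cover almost all of $V':=V(D)\setminus (R\cup V(P_{\text{abs}}))$ by a modest collection of vertex-disjoint directed paths. Since P\'osa-style rotations are unavailable in the directed setting, I would run a DFS/greedy path-extension: using the in- and out-degree conditions \PMinDegall together with the edge bound \PXYEdges to rule out local concentration, one shows that when the process halts, both the number of open paths and the number of uncovered vertices are comfortably $o(|R|/\log n)$. I would then apply the Connecting Lemma repeatedly to link consecutive path endpoints (and the endpoints of $P_{\text{abs}}$) through short directed paths routed inside $R$; each connection costs only $O(\log n/\log(np))$ reservoir vertices, so almost all of $R$ remains untouched.

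At this point we have a single long directed path starting at $x$, traversing $P_{\text{abs}}$ and ending at $y$, missing only a small residual set $Z$ consisting of the few uncovered vertices of $V'$ together with the unused reservoir. Because $|Z|$ lies below the absorbing threshold, the absorbing property of $P_{\text{abs}}$ yields a directed path on all of $V(D)$, and one final application of the Connecting Lemma through vertices of $R$ set aside beforehand closes this path into a directed Hamilton cycle.

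The principal obstacle is the Absorbing Lemma itself at the claimed density $p=\omega(\log^{8}n/n)$. At the sparser threshold studied here one can no longer use transitive triangles as absorbers, as in \cite{hefetz2014random}, since they are not abundant enough. I would expect the construction to follow Montgomery's embedding trick: attach to each vertex $v\in V(D)$ a family of candidate absorbing gadgets (short directed subpaths which, when modified, can accommodate $v$), build an auxiliary bipartite graph whose left side is $V(D)$ and whose right side is the collection of gadgets, and use an expansion/Hall-type matching argument (which requires exactly the no-dense-subgraph conditions \PXEdges and \PXYEdges) to realize a system of vertex-disjoint gadgets, one per vertex. The chosen gadgets are then stitched into the single path $P_{\text{abs}}$ using the Connecting Lemma. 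Transferring this machinery to the directed setting, where the in- and out-degree conditions decouple and the gadget has to respect the arc orientations, is where I expect the main technical work to lie.
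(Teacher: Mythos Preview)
Your high-level plan follows the absorbing paradigm the paper uses, but the size budgets do not close, and this is a genuine gap rather than a detail. You set aside a reservoir $R$ of order $n/\log^{2}n$ and then an absorbing set $V_{\text{abs}}$ of only polylogarithmic size. The residual set $Z$ you need to absorb at the end consists of the unused reservoir together with the vertices missed by the DFS step, so $|Z|=\Theta(n/\log^{2}n)$; an absorbing path living inside a polylogarithmic set cannot possibly absorb that many vertices, whatever its internal structure. Relatedly, demanding that $P_{\text{abs}}$ absorb an \emph{arbitrary} small set $Z\subseteq V(D)\setminus V(P_{\text{abs}})$ forces you to build an absorber for every vertex of $D$, which is far more than the construction can support at this density.

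The paper resolves both issues by a different allocation of roles. The vertex set is split into $V_1\cup\cdots\cup V_5$ where $|V_1|\approx n/\log^{3}n$ and $|V_2|,|V_3|,|V_4|=\Theta(n)$. The set $V_1$ serves \emph{simultaneously} as the connecting reservoir and as the sole target of absorption: the Absorbing Lemma builds, for each $x\in V_1$, an individual absorber gadget (a short oriented cycle plus $O(\log n)$ short directed paths, all found via the Connecting Lemma) inside $V_2\cup V_3$, and then strings these $|V_1|$ gadgets into one path $P^*\subseteq V_2\cup V_3\cup V_4$. Thus $P^*$ has size $\Theta(|V_1|\log^{2}n)$ and can absorb any $W\subseteq V_1$, which is exactly what is needed. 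The almost-spanning step is also different and cleaner than DFS: one partitions $U=(V_2\cup V_3\cup V_4\cup V_5)\setminus V(P^*)$ into equal slices $S_1,\ldots,S_k$ and finds a perfect matching $S_i\to S_{i+1}$ for each $i$ (a Hall-type argument using \PXEdges{} and \PXYEdges{}), so $U$ is covered \emph{exactly} by $O(n/\log^{5}n)$ disjoint directed paths with no uncovered leftover outside $V_1$. The Connecting Lemma then routes through $V_1$ to close everything into a cycle, and the only vertices not yet on the cycle lie in $V_1$, where the absorbers were prepared. Finally, Montgomery's trick is used not in the absorber construction but inside the Connecting Lemma itself, to connect $t$ prescribed pairs by disjoint $\sigma$-walks of a fixed length.
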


Before we give the proof of Theorem \ref{thm:main_pseudo}, we first show how it implies Theorem \ref{thm:main}.

\begin{proof}[Proof of Theorem \ref{thm:main}]

 Let $\beta$, $n$ and $p$ be as stated in the theorem and let $\alpha = \beta / 4 $. By Theorem \ref{thm:main_pseudo}, it is sufficient to prove that $G \sim \dnp$ w.h.p satisfies that $D = G - H$ is $(n,\alpha,p)$-pseudorandom, for every $H \subseteq G$ as given in Definition \ref{def:local}. Using the fact that in $G$ w.h.p.\ $d_G^+(v),d_G^-(v)\in (1\pm o(1))np$ for each $v\in V(G)$, Definition \ref{def:local} implies that we have to show this for all subgraphs $H \subset G$ which satisfy $d^{+}_H(v), d^-_H(v) \leq (1/2-\beta )np$, for every $v \in V(H)$. Let us consider one such subgraph $H$.

First, observe that that for every vertex $v \in D$ we have
\begin{equation}
d^{\pm}_D(v) \ge (1 - o(1) - 1/2 + \beta)np \ge (1/2 + 2\alpha)np, \label{eq:mindeg_main}
\end{equation}
thus the property \PMinDegall{} holds.

For \PXEdges{}, let $X \subseteq V(G)$ be an arbitrary subset of size at
most $\frac{\log^2 n}{p}$. Since $e_G(X) \sim \Bin( 2\binom{|X|}{2}
, p)$, by Lemma \ref{Che2} we have
\[
\Pr\left[ e_G(X) \geq |X| \log^{2.1} n \right ] \leq \left( \frac{e
|X|^2 p}{|X| \log^{2.1}n} \right)^{|X| \log^{2.1}n}.
\]
A union bound over the choices of $X$ shows that the probability that
there exists a subset $X \subseteq V(G)$ of size $x \le \frac{\log^2n}{p}$ such that $e_G(X) \geq |X| \log^{2.1} n$ is at
most
\begin{align*}
\sum_{x\leq \frac{\log^2 n}{p}}  \binom{n}{x} \left(\frac{e x^2p}{ x \log^{2 .1}n} \right)^{x \log^{2.1}n} 
& \leq \sum_{x\leq \frac{\log^2 n}{p}}  \left(n\left(\frac{exp}{\log^{2 .1}n} \right)^{\log^{2.1}n}\right)^x \\
&\leq \sum_{x\leq \frac{\log^2 n}{p}}  \left(n\left(\frac{e}{\log^{0.1}n} \right)^{\log^{2.1}n}\right)^x = o(1).
\end{align*}
Hence,  \PXEdges{} holds in $G$ and therefore in $D\subseteq G$ as well.

The proof of  \PXYEdges{} goes similarly. Consider disjoint subsets
$X,Y\subseteq V(G)$ of size at least $\log^{1.1} n / p$. Then by Lemma \ref{Che} we have
\[
\Pr\left[ e_G(X,Y) > (1+ \alpha/2) |X| |Y| p\right] <
e^{-\Omega\left(|X||Y|p\right)}.
\]
A union bound over the choices for $X$ and $Y$ yields that the
probability that  \PXYEdges{} fails is upper bounded by
\begin{align*}
     \sum_{x,y = \frac{\log^{1.1} n}{p}}^{n}  \binom{n}{x}  \binom{n}{y}  e^{-\Omega\left(xyp\right)}
     \leq \sum_{x, y = \frac{\log^{1.1} n}{p}}^{n} n^x n^y  e^{-\Omega\left(\max \{x,y\} \log^{1.1} n\right)}
    = o(1).
\end{align*}

This proves $G$ is w.h.p such that $D$ is an $(n, \alpha, p)$-pseudorandom digraph, regardless of the choice of $H$, and thus completes the proof.
\end{proof}

Throughout the paper, unless stated otherwise, we always assume that $D$ is a $(n, \alpha , p)$-pseudorandom digraph where $\alpha$ is a positive constant and $p = \omega(\frac{\log^8 n}{n})$. Moreover, with $\{V_1, V_2, V_3, V_4, V_5\}$ we denote the partition of $V(D)$ given by the following claim.

\begin{claim} \label{claim:partition}
There exists a partition $V(D) = \bigcup_{i = 1}^5 V_i$ of the vertices of $D$, such that the following holds:
\begin{enumerate}[\bfseries (Q1)]
\item for every $v \in V(D)$ and every $i \in \{1,2,3,4,5\}$, we have \label{property:min-degree}
$ d^{\pm}_D(v, V_i) \ge (1/2 + \alpha)|V_i| p,$
\item $|V_1| = (1 + o(1))n / \log^3n$ and  $|V_2|, |V_3|, |V_4| \in  \left(\frac{\alpha}{5(1 + 2\alpha)}n, \frac{\alpha}{4(1 + 2\alpha)}n \right)$. \label{property:size}
\end{enumerate}
\end{claim}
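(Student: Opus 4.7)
The plan is to obtain the partition by a single application of Lemma \ref{lemma:random-partition-good-degrees} with $U = V(D)$ and five carefully chosen target sizes $s_1,\ldots,s_5$ that sum to $n$, so that the ``leftover'' $Z$ is empty and all five parts inherit the degree bound.

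Concretely, I would set
$$s_1 = \lfloor n / \log^3 n \rfloor, \qquad s_2 = s_3 = s_4 = \left\lfloor \frac{\alpha n}{4.5\,(1+2\alpha)} \right\rfloor, \qquad s_5 = n - s_1 - s_2 - s_3 - s_4.$$
For $n$ large enough, $s_1 = (1+o(1))n/\log^3 n$ and each of $s_2,s_3,s_4$ lies strictly inside the open interval $\bigl(\tfrac{\alpha n}{5(1+2\alpha)}, \tfrac{\alpha n}{4(1+2\alpha)}\bigr)$, giving \PSize. Also $s_5$ is a positive constant fraction of $n$, since $3\alpha / (4.5(1+2\alpha)) < 1$. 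Using $p = \omega(\log^8 n / n)$, we have $\log^{1.1}n / p = o(n/\log^{6} n)$, so in particular every $s_i$, including the smallest one $s_1$, comfortably exceeds $\log^{1.1} n / p$.

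Next, set $c = 1/2 + 2\alpha$ and pick $\varepsilon = \alpha/(1+4\alpha)$, so that $(1-\varepsilon)(1/2+2\alpha) \ge 1/2 + \alpha$. By property \PMinDegall{}, every $v \in V(D)$ satisfies $d^{\pm}_D(v, V(D)) \ge c\, p\, n$, so the hypotheses of Lemma \ref{lemma:random-partition-good-degrees} are satisfied with $k=5$ and the chosen $s_i$. The lemma then produces disjoint subsets $V_1, \ldots, V_5 \subseteq V(D)$ with $|V_i| = s_i$ and
$$d^{\pm}_D(v, V_i) \ge (1-\varepsilon)\, c\, p\, s_i \ge (1/2 + \alpha)\, p\, |V_i|$$
for every $v \in V(D)$ and every $i \in \{1,\ldots,5\}$, which is \PMinDeg. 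Since $\sum_{i=1}^5 s_i = n$, the sets $V_1,\ldots,V_5$ form a partition of $V(D)$.

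The only point requiring care is picking the target sizes so that they simultaneously lie in the prescribed intervals of \PSize, sum to exactly $n$, and are each at least $\log^{1.1} n / p$; but all three conditions are easy to arrange because the ranges in \PSize{} are open and $p$ is large enough that the lower bound on $s_i$ is only polylogarithmic in $n$. No probabilistic argument is needed beyond the one already inside Lemma \ref{lemma:random-partition-good-degrees}, so there is no real obstacle in the proof.
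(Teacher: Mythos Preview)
Your proposal is correct and follows essentially the same approach as the paper: choose five target sizes satisfying \PSize{} and summing to $n$, then apply Lemma~\ref{lemma:random-partition-good-degrees} with $U=V(D)$, $c=1/2+2\alpha$ and a small $\varepsilon$ so that $(1-\varepsilon)(1/2+2\alpha)\ge 1/2+\alpha$. The only differences are cosmetic---you commit to specific integer values for $s_2,s_3,s_4$ and a specific $\varepsilon$, whereas the paper leaves these choices implicit---and your verification that $(1-\varepsilon)c\ge 1/2+\alpha$ is in fact cleaner than the paper's.
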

\begin{proof}
Let $s_1 = (1 + o(1)) n / \log^3 n$, $s_2, s_3, s_4 \in (\frac{\alpha}{5(1 + 2\alpha)}n, \frac{\alpha}{4(1 + 2\alpha)}n )$ be arbitrarily chosen integers and $s_5 = n - \sum_{i=1}^4 s_i$.
As $s_i \geq \frac{\log^{1.1}n}{p}$ for all $i \in \{1, 2,3, 4, 5\}$, by applying Lemma \ref{lemma:random-partition-good-degrees} with $c = 1/2 + 2 \alpha$, $\varepsilon = \alpha/3$, $V(D)$ (as $U$), $k=5$, $s_1, s_2, s_3, s_4$ and $s_5$ and by using \PMinDegall{} we obtain sets $V_1, V_2, V_3, V_4$ and $V_5$ ($S_1, S_2, S_3, S_4$ and $S_5$ in the Lemma \ref{lemma:random-partition-good-degrees}) such that  $V(D) =  V_1 \cup V_2 \cup V_3 \cup V_4 \cup V_5$ and 
\begin{align*}
d^{\pm}(v, S_i) & \geq (1 - \varepsilon)c s_i p 
 = (1 - \varepsilon) \cdot (1/2 + 2\alpha) s_i p 
 \ge (1/2 + \alpha)s_i p,
\end{align*}
for every $v \in V(D)$ and $i \in \{1,2,3,4,5\}$, as required.
\end{proof}

\subsection{Proof of Theorem \ref{thm:main_pseudo}}

The following two lemmas will serve as our main tool for proving Hamiltonicity of $D$.

\begin{lemma}[\textbf{Absorbing Lemma}] \label{lemma:absorbing}
There exists a directed path $P^*$ with $V(P^*) \subseteq V_2 \cup V_3 \cup V_4$ such that for every $W \subseteq V_1$ there is a directed path $P^*_{W}$ with $V(P^*_W) = V(P^*) \cup W$ and such that $P^*_W$ and $P^*$ have the same endpoints. 
\end{lemma}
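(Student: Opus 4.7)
The plan is to follow the absorbing method of R{\"o}dl--Ruci{\'n}ski--Szemer{\'e}di, combined with the Connecting Lemma (which we assume available from the previous section) and a template-style selection argument in the spirit of Montgomery. The key idea is to build $P^*$ as a directed path inside $V_2 \cup V_3 \cup V_4$ that contains a dedicated \emph{absorbing gadget} $A_v$ for every vertex $v \in V_1$. Each gadget $A_v$ should admit two internal $a_v b_v$-paths sharing the same endpoints: a \emph{base} path through $V(A_v)$ and an \emph{absorbing} path through $V(A_v) \cup \{v\}$. Given such a $P^*$, for any $W \subseteq V_1$ one obtains $P^*_W$ by locally swapping, for each $v \in W$, the base path of $A_v$ with the absorbing one; this preserves the global endpoints of $P^*$ and adds exactly the vertices of $W$.

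I would proceed in three main steps. First, for each $v \in V_1$, construct many candidate gadgets: pick $a_v \in N^-(v) \cap V_2$ and $b_v \in N^+(v) \cap V_3$ (both pools have size $\Theta(np)$ by \PMinDeg), and then use the Connecting Lemma to build a base $a_v b_v$-path inside the reservoir $V_2 \cup V_3 \cup V_4 \setminus \{v\}$ and an absorbing $a_v b_v$-path passing through $v$, arranged so that the two vertex sets differ by exactly $\{v\}$. Second, since $|V_1| = (1+o(1))n/\log^3 n$ is much smaller than the reservoir, which has linear size by \PSize, I would greedily or probabilistically (with a Chernoff-type argument) select one gadget per $v \in V_1$ so that the chosen gadgets are pairwise vertex-disjoint; disjointness ensures that swapping in one gadget does not interfere with another. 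Third, I would order the chosen gadgets and use the Connecting Lemma once more to stitch them together through the still-unused reservoir vertices into a single directed path $P^*$ with well-defined endpoints.

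I expect the main obstacle to lie in the first step: exhibiting enough absorbers per vertex at density $p = \omega(\log^8 n / n)$. A naive 2- or 3-vertex absorber would require a transitive triangle on $\{v, a, b\}$ with $a\to v$, $v\to b$ and $a\to b$, but the expected number of such triangles through a fixed $v$ is only $\Theta(n^2 p^3) = o(1)$ in this regime, so triangle-based absorbers may simply not exist for most vertices. The remedy is to let each gadget be a longer reservoir structure and to realize both the base path and the $v$-absorbing variant via the Connecting Lemma, leveraging the minimum-degree lower bound \PMinDeg together with the pseudorandom density upper bound \PXYEdges to run a counting or random-partitioning argument that guarantees, collectively, that sufficiently many vertex-disjoint such gadgets coexist inside $V_2 \cup V_3 \cup V_4$. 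Keeping careful track of which reservoir vertices each gadget consumes, so that both the disjointness in step two and the final stitching in step three go through, is the delicate part of the argument.
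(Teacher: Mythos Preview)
Your overall architecture --- one absorbing gadget per vertex of $V_1$, then stitch them into a single path --- matches the paper, and your diagnosis that triangle-based absorbers are too dense for this regime is correct. However, there is a genuine gap in Step~1, and a methodological difference in Step~2.

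\textbf{The gap.} You never say what the gadget $A_v$ actually \emph{is}. Writing ``use the Connecting Lemma to build a base $a_v b_v$-path and an absorbing $a_v b_v$-path through $v$, arranged so that the two vertex sets differ by exactly $\{v\}$'' is not a construction: the Connecting Lemma outputs one walk per input pair, with no control over how two such walks relate to each other. Two independently produced $a_v b_v$-paths will typically share no internal vertices at all, so their union is not a gadget admitting two spanning $a_v b_v$-paths that differ in a single vertex. What is needed is a fixed combinatorial template on $O(\log^2 n)$ vertices that \emph{structurally} guarantees both routings. The paper supplies exactly this: an oriented cycle of length $4k+3$ (with a specific alternating orientation) through $v$, together with $2k$ auxiliary directed paths hanging off designated pairs on the cycle; one then checks by hand that this object has a spanning $s_x t_x$-path and a second $s_x t_x$-path missing only $v$. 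The Connecting Lemma is used not to ``build two paths'' but to \emph{embed} this template: once to realise each cycle as a $\sigma$-walk from $v$ to $v$ inside $V_2$, and once more to realise all the auxiliary paths inside $V_3$. Without such a template your Step~1 does not go through.

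\textbf{The difference in Step~2.} You propose the classical R{\"o}dl--Ruci{\'n}ski--Szemer{\'e}di paradigm: produce many candidate absorbers per vertex, then randomly thin to a disjoint family. The paper bypasses this entirely. Because the Connecting Lemma already returns \emph{internally disjoint} walks for a whole family of pairs simultaneously, one feeds in all $|V_1|$ pairs $(v,v)$ at once (for the cycles) and then all $2k|V_1|$ pairs $(s_i^v,t_i^v)$ at once (for the auxiliary paths), obtaining pairwise disjoint absorbers in a single shot. No counting, no random selection, no Montgomery template is needed here; the disjointness is delivered for free by the Connecting Lemma, and this is precisely why the sets $V_2$, $V_3$, $V_4$ were separated in advance.
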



The following lemma states that, under certain assumptions, one can find disjoint $\sigma$-walks connecting specified pairs of vertices, for arbitrary $\sigma$ of length $\Omega(\log n).$ The proof of Connecting Lemma is a modification of a beautiful argument by Montgomery \cite{montgomery2014embedding}.

\begin{lemma}[\textbf{Connecting Lemma}] \label{lemma:connecting-lemma} 
Let $\ell$ and $t$ be integers such that $\ell \geq 10 \log n$ and  $t\geq \frac{4\log^2n}p$ and let $\{(a_i, b_i)\}_{i = 1}^t$ be a family of pairs of vertices from $V(D)$ with $a_i \neq a_j$ and $b_i \neq b_j$ for every distinct $i, j \in [t]$. Assume that $K \subseteq V(D) \setminus \bigcup_{i = 1}^t \{a_i, b_i\}$ is such that 
\begin{enumerate}[(i)]
\item $|K| = \omega(t \ell)$, 
\item for every $v \in K$ we have $d^{\pm}(v, K) \geq (\frac{1}{2} + \alpha)p |K|$ and
\item for every $i \in [t]$ we have 
$$d^{\pm}(a_i, K), d^{\pm}(b_i,K) \geq (1/2 + \alpha) p
|K|.$$
\end{enumerate}
Then for every $\sigma \in \{-,
+\}^{\ell}$ there exist $t$ internally disjoint $\sigma$-walks
$P_1,\ldots,P_t$ such that for each $i$, $P_i$ connects $a_i$ to
$b_i$ and $V(P_i)\setminus \{a_i,b_i\} \subseteq K$.
\end{lemma}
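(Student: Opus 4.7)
The plan is to find the walks sequentially. Because $|K|=\omega(t\ell)$, after building the first $i-1$ walks the at most $(i-1)(\ell-1)=o(|K|)$ already-used interior vertices leave an available set $K_i\subseteq K$ of size $(1-o(1))|K|$ in which the degree hypotheses still hold (with constants only negligibly weakened). This reduces the problem to a single-pair connecting statement. Fix $k:=\lceil\log n/\log\log n\rceil$, split $\sigma=\sigma_L\cdot\sigma_M\cdot\sigma_R$ with $|\sigma_L|=|\sigma_R|=k$ and $|\sigma_M|=\ell-2k$, and apply Lemma~\ref{lemma:random-partition-good-degrees} to partition the available set as $K'_A\sqcup K'_B\sqcup T_1\sqcup\cdots\sqcup T_{|\sigma_M|-1}$ with $|K'_A|=|K'_B|=\Theta(|K'|)$ and each transit part $T_j$ of size $\Theta(\log^{1.1}n/p)$ (all pieces inheriting the degree condition). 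I would then grow a layered forward BFS from $a$ inside $K'_A$ along $\sigma_L$, giving layers $A_0,\ldots,A_k$; a symmetric backward BFS from $b$ inside $K'_B$ along $\sigma_R$ reversed, giving $B_0,\ldots,B_k$; and finally a partitioned BFS $U_0=A_k$, $U_j=N^{\sigma_M(j)}(U_{j-1})\cap T_j$ through the transit parts. Picking any edge of type $\sigma_M(|\sigma_M|)$ from some $u\in U_{|\sigma_M|-1}$ into $B_k$ and tracing walks back through each BFS tree then assembles a $\sigma$-walk of length exactly $\ell$ from $a$ to $b$ whose interior vertices are automatically distinct thanks to the disjointness of $K'_A,K'_B,T_1,\ldots,T_{|\sigma_M|-1}$ and the layered structure inside each tree.

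The heart of the argument is the expansion claim $|A_k|\geq(1/2+\alpha/2)|K'_A|$ (and symmetrically for $B_k$), together with $|U_j|\geq(1/2+\alpha/3)|T_j|$ for every $j$. In the small-set regime $|A_j|\leq\log^{1.1}n/p$, the degree hypothesis supplies at least $(1/2+\alpha-o(1))p|K'_A||A_j|$ edges of type $\sigma_L(j+1)$ from $A_j$ into $K'_A$, while \PXEdges{} applied to the still-small set $A_0\cup\cdots\cup A_j$ bounds both the edges that are absorbed by previously-reached layers and the multiplicity with which a single vertex of $A_{j+1}$ can absorb the remainder, yielding a per-step multiplicative growth of order $p|K'_A|/\log^{2.1}n=\omega(\log^{0.9}n)$. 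So the BFS crosses the threshold $\log^{1.1}n/p$ within $O(\log n/\log\log n)\leq k$ steps. In the large-set regime $|A_j|\geq\log^{1.1}n/p$, a single application of \PXYEdges{} to the disjoint pair $(A_j, K'_A\setminus N^{\sigma_L(j+1)}(A_j))$, whose $\sigma_L(j+1)$-edge count is zero by definition, combined with the degree lower bound, forces
\[
|N^{\sigma_L(j+1)}(A_j)\cap K'_A|\geq\left(\tfrac{1+2\alpha}{2+\alpha}-o(1)\right)|K'_A|,
\]
and after subtracting the still-negligible reached set $|A_{j+1}|\geq(1/2+\alpha/2)|K'_A|$ follows. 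The transit BFS is the same large-set argument applied repeatedly in fresh parts: the choice $|T_j|=\Theta(\log^{1.1}n/p)$ is exactly what keeps $|U_j|$ comfortably above the \PXYEdges{} threshold and allows the same \PXYEdges{}+degree argument on $(U_{j-1}, T_j\setminus N^{\sigma_M(j)}(U_{j-1}))$ to give $|U_j|\geq(1/2+\alpha/3)|T_j|$. The closing edge from $U_{|\sigma_M|-1}$ into $B_k$ exists by one more combination of the degree hypothesis with \PXYEdges{} applied to $(U_{|\sigma_M|-1}, K'_B\setminus B_k)$, showing the number of $\sigma_M(|\sigma_M|)$-edges between these two large sets is strictly positive.

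The main obstacle I expect is precisely the transition between the $K'_A$-BFS and the transit BFS: a naive layered BFS continued for $\Omega(\log n)$ steps in $K'_A$ would ``die'' once the reached set covers a constant fraction of $K'_A$, since the new frontier must avoid the reached vertices and the available lower bounds leave room for $|A_{j+1}|=0$. The Montgomery-style remedy is to stop the $K'_A$-BFS after only $k$ steps and complete the walk through the separately-reserved sequence of fresh transit parts $T_j$, inside each of which the BFS does not shrink because the target set is always fresh. The hypothesis $p=\omega(\log^8 n/n)$ together with $|K|=\omega(t\ell)$ is what both forces the small-set multiplicative growth factor strictly above $1$ and leaves enough room in $K'$ to carve out the transit parts while keeping $K'_A,K'_B$ comfortably large.
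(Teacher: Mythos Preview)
Your sequential reduction has a genuine gap. You assert that after removing the at most $(i-1)(\ell-1)=o(|K|)$ interior vertices of previously built walks, ``the degree hypotheses still hold (with constants only negligibly weakened)'' in the leftover set $K_i$. But degrees into $K$ are only of order $p|K|$, and the lemma assumes $|K|=\omega(t\ell)$, \emph{not} $p|K|=\omega(t\ell)$. In the paper's own application with $K=V_1$ of size $\approx n/\log^3 n$, $t\approx n/\log^5 n$, $\ell=10\log n$, and (say) $p=\log^9 n/n$, one has $t\ell\approx n/\log^4 n$ while $p|K|\approx\log^6 n$; after only $\Theta(\log^5 n)$ walks the guaranteed degree of the next $a_i$ into the available set has vanished, and your single-pair BFS cannot even take its first step. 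This is exactly the obstruction Montgomery's argument is designed to avoid, and you have misidentified where that idea enters. The paper does \emph{not} work pair by pair: it partitions $K$ once into fresh layers $A_1,\ldots,A_m,B_1,\ldots,B_m$ (with $m\approx\log_2 t$) and two large reservoirs $R_A,R_B$; it then grows vertex-disjoint binary $\sigma$-trees from all still-unconnected $a_i,b_i$ simultaneously through the fresh layers via $(2,\tau)$-matchings, so that at every stage the leaf sets have size $\ge t$; and at each of the $m$ rounds it applies a ``connect half the pairs'' lemma (Lemma~\ref{lemma:connect-half-pairs}) inside $R_A\cup R_B$. The crucial point is that this connect-half step needs only the \emph{expansion} hypothesis $|N^{\pm}(S,R_X)|\ge(1/2+\gamma)|R_X|$ for sets $S$ of size $\ge\log^2 n/p$, and expansion of a large set---unlike the degree of a single vertex---survives the removal of $o(|R_X|)$ used vertices.

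There is also a smaller gap in the single-pair argument itself. Your layered-BFS claim $|A_k|\ge(1/2+\alpha/2)|K'_A|$ relies on the reached set $A_0\cup\cdots\cup A_{k-1}$ being negligible, but the pseudorandomness axioms impose no upper bound on individual degrees: $|A_1|$ can already be a constant fraction of $|K'_A|$, and once some layer occupies half of $K'_A$ the next layer (which must avoid all earlier layers) is no longer forced to be large, so the BFS may collapse well before step $k$. The paper's Lemma~\ref{lemma:expand-1-out-of-many} handles exactly this by retaining only $\lceil\log^2 n/p\rceil$ vertices at the penultimate step, so that the union of all walk-interiors has size $O(\ell\log^2 n/p)=o(|Y|)$ before performing the single final large-set expansion.
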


With these two lemmas at hand, we are ready to give a proof of Theorem \ref{thm:main_pseudo}.

\begin{proof}[Proof of Theorem \ref{thm:main_pseudo}] 
Let $P^*$ be a path obtained from Lemma \ref{lemma:absorbing} and let $U := (V_2 \cup V_3 \cup V_4 \cup V_5) \setminus V(P^*)$. We first show that there exists a family $\{Q_1, \ldots, Q_{t'}\}$ of $t' \in [n / \log^5n, 2 n / \log^5 n]$ vertex-disjoint directed paths  such that $U = \bigcup_{i=1}^{t'} V(Q_i)$. It follows from property \PSize{} that $|V_5| \ge (1 - \frac{\alpha}{1 + 2\alpha})n$. Furthermore, property \PMinDeg{}  and $U \subseteq V_2 \cup V_3 \cup V_4 \cup V_5$ imply
\begin{align}
d^{\pm}(v, U) & \ge d^\pm(v, V_5) \ge (1/2 + \alpha)p|V_5| 
 \ge (1/2 + \alpha)p  \frac{1 + \alpha}{1 + 2\alpha}|U| 
 = (1/2 + \alpha/2)p|U|, \label{eq:d_U}
\end{align}
for every $v \in U$. Applying Lemma \ref{lemma:random-partition-good-degrees} with $c = 1/2 + \alpha/2$, $\varepsilon$ such that $(1 - \varepsilon)c > 1/2 + \alpha/4$, $U$, $k = \lfloor \log^5 n |U| / n \rfloor$ and $s_i = \lfloor n / \log^5 n \rfloor$ for every $i \in [k]$, together with \eqref{eq:d_U}, we obtain disjoint subsets $S_1, \ldots, S_k \subseteq U$ such that
\begin{equation}
d^{\pm}(v, S_i) \ge (1 - \varepsilon)cp |S_i| \ge (1/2 + \alpha')p|S_i|, \label{eq:alpha_prim}
\end{equation}
for some constant $\alpha' > \alpha / 4$, every $i \in [k]$ and $v \in U$. We now use the following claim, whose proof we defer to the end of the subsection.

\begin{claim} \label{claim:perfect_match}
For every $i \in [k-1]$, there exists a perfect matching from $S_i$ to $S_{i+1}$.
\end{claim}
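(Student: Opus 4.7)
The plan is to apply Hall's marriage theorem to the auxiliary bipartite graph $B$ on $(S_i, S_{i+1})$ whose edges are the arcs of $D$ directed from $S_i$ to $S_{i+1}$; writing $s = \lfloor n/\log^5 n\rfloor$ we have $|S_i|=|S_{i+1}|=s$ and $ps = \omega(\log^3 n)$, so a perfect matching exists iff $|N^+(T, S_{i+1})|\ge |T|$ for every $T\subseteq S_i$. By \eqref{eq:alpha_prim}, every vertex of $S_i$ has at least $(1/2+\alpha')ps$ out-neighbours in $S_{i+1}$ and symmetrically on the in-side, with $\alpha'>\alpha/4$. Suppose for contradiction that $|N|<|T|$ for some $T\subseteq S_i$, where $N := N^+(T,S_{i+1})$; the single-vertex out-degree bound yields $|T|>|N|\ge (1/2+\alpha')ps$ immediately.

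I would organize the case analysis around two pseudorandom edge-count tools: \PXYEdges needs both sets of size at least $\log^{1.1}n/p$, while \PXEdges needs the union of size at most $\log^2 n/p$. Case (a): both $|T|,|N|\ge \log^{1.1}n/p$. Double-counting $e_D(T,N)$ via the min-degree lower bound $|T|(1/2+\alpha')ps$ and the \PXYEdges upper bound $(1+\alpha/2)|T||N|p$ yields $|N|\ge (1/2+\alpha')s/(1+\alpha/2) > s/2$ (using $\alpha'>\alpha/4$), hence also $|T|>s/2$. Case (b): $|T|<\log^{1.1}n/p$, which forces $|N|<|T|<\log^{1.1}n/p$; then $|T\cup N|<2\log^{1.1}n/p<\log^2n/p$, so \PXEdges gives $e_D(T\cup N)\le 2|T|\log^{2.1}n$, contradicting $e_D(T,N)\ge |T|(1/2+\alpha')ps$ because $ps=\omega(\log^3 n)\gg \log^{2.1}n$. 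Case (c): $|T|\ge \log^{1.1}n/p$ but $|N|<\log^{1.1}n/p$; neither pseudorandom bound then applies directly to $(T,N)$, and this is the genuinely delicate case. I would truncate $T$ to any $T_0\subseteq T$ of size $\lceil\log^2n/(2p)\rceil$, so that $|T_0\cup N^+(T_0,S_{i+1})|\le |T_0|+|N|\le \log^2 n/p$; applying \PXEdges to $T_0\cup N^+(T_0,S_{i+1})$ together with the out-degree lower bound on $T_0$ again yields the contradiction $(1/2+\alpha')ps\le 2\log^{2.1}n$.

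This leaves only Case (a), where $|T|>s/2$ and $|N|>s/2$ are compatible with $|N|<|T|$; to finish I would set $Y:=S_{i+1}\setminus N$ and run a symmetric in-side analysis. By definition no arc of $D$ goes from $T$ to $Y$, so $N^-(Y,S_i)\subseteq S_i\setminus T$ gives $|N^-(Y,S_i)|\le s-|T|<s/2$, whereas $|Y|=s-|N|>s-|T|$, so Hall also fails on the in-side for $Y$. Rerunning the very same case analysis on $Y$ with in-degrees in place of out-degrees yields either $|N^-(Y,S_i)|\ge |Y|$ (when $|Y|\le (1/2+\alpha')ps$, from the in-degree bound alone), $|N^-(Y,S_i)|>s/2$ (when \PXYEdges applies), or an immediate \PXEdges contradiction in the bridging regime; each option contradicts one of $|N^-(Y,S_i)|<|Y|$ and $|N^-(Y,S_i)|<s/2$, closing the argument. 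The main obstacle is Case (c), the bridging regime between the scales at which \PXYEdges and \PXEdges operate; the fix is the $T_0$-truncation combined with the quantitative gap $ps\gg \log^{2.1}n$ supplied by $p=\omega(\log^8 n/n)$.
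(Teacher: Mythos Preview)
Your argument is correct, and it follows the same overall strategy as the paper (Hall's criterion together with \PXEdges{} and \PXYEdges{}), but the paper's execution is considerably shorter because of two simplifications you do not use. First, the paper invokes the symmetric form of Hall's criterion from the outset: a perfect matching exists iff $|N^+(X,S_{i+1})|\ge|X|$ for every $X\subseteq S_i$ with $|X|\le |S_i|/2$ \emph{and} $|N^-(Y,S_i)|\ge|Y|$ for every $Y\subseteq S_{i+1}$ with $|Y|\le|S_{i+1}|/2$. This gives $|X|\le s/2$ for free and makes your entire ``Case~(a) remainder'' and the second symmetric pass unnecessary. Second, instead of working with $N=N^+(X,S_{i+1})$ directly, the paper enlarges it to a set $Y\supseteq N$ of size exactly $|X|-1$; since $|X|$ and $|Y|$ are then essentially equal, either both lie below $\log^2 n/(2p)$ (and \PXEdges{} applies) or both lie above $\log^{1.1}n/p$ (and \PXYEdges{} applies), so the paper's proof is a clean two-case split with no ``bridging'' Case~(c) at all.

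One small technical point: in your Case~(c) you assume only $|T|\ge\log^{1.1}n/p$, yet you truncate to a subset $T_0$ of size $\lceil\log^2 n/(2p)\rceil$, which need not exist when $\log^{1.1}n/p\le|T|<\log^2 n/(2p)$. This is harmless---in that sub-range $|T\cup N|<\log^2 n/p$ and the Case~(b) computation applies verbatim---but you should shift the boundary between (b) and (c) to $\log^2 n/(2p)$ rather than $\log^{1.1}n/p$.
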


Observe that such matchings induce $s := \lfloor n / \log^5 n \rfloor$ vertex-disjoint directed paths $\{Q_1, \ldots, Q_{s}\}$, each of length $k$, such that $\bigcup_{i = 1}^{s} V(Q_i) = \bigcup_{i = 1}^{k} S_i$. On the other hand, by taking each vertex in $U \setminus \bigcup_{i = 1}^{k} S_i$ to be a $0$-length path, we obtain at most $s$ additional paths $\{Q_{s + 1}, \ldots, Q_{t'}\}$. Note that the family $\{Q_1, \ldots, Q_{t'}\}$ satisfies the desired properties.

As a final step, we find a cycle $C$ in $D$ which contains paths $Q_1, \ldots, Q_{t'}, P^*$ and maybe some vertices from $V_1$. Using Lemma \ref{lemma:absorbing}, we can absorb the remaining vertices from $V_1$ and obtain a Hamiltonian cycle. We now make this more precise.

For $i \in [t']$, let us denote with $a_i$ and $b_i$ the first and the last vertex on the path $Q_i$. Furthermore, let $a_{t'+1}$ and $b_{t'+1}$ be the first and the last vertex of the path $P^*$. Applying Lemma \ref{lemma:connecting-lemma} with $\ell = 10 \log n$, $t = t' + 1$, the family of pairs $\{(b_{i}, a_{i+1})\}_{i = 1}^{t'} \cup \{(b_{t'+1}, a_1)\}$ and $V_1$ (as $K$), we obtain vertex disjoint directed paths $P_1, \ldots, P_{t' + 1}$. Observe that $Q_1, P_1, Q_2, \ldots, Q_{t'}, P_{t'}, P^*, P_{t'+1}$ forms a directed cycle $C$ with $V_2 \cup V_3 \cup V_4 \cup V_5\subseteq V(C)$.
By Lemma \ref{lemma:absorbing} there is a path $P^*_{V_1 \setminus V(C)}$ with the same endpoints as $P^*$ and such that $V(P^*_{V_1 \setminus V(C)}) = V(P^*) \cup (V_1 \setminus V(C))$.
 As $C$ contains the path $P^*$, we can replace $P^*$ with $P^*_{V_1 \setminus V(C)}$, thus obtaining a Hamiltonian cycle.
\end{proof}

\begin{proof}[Proof of Claim \ref{claim:perfect_match}]

We use the following theorem which is equivalent to Hall's condition (see \cite{West}): There exists a perfect matching from $S_i$ to $S_{i+1}$ if
and only if for every subset $X\subseteq S_i$ of size $|X|\leq |S_i|/2$
we have $|N^+(X,S_{i+1})|\geq |X|$ and for every subset $Y\subseteq S_{i+1}$ of
size $|Y|\leq |S_{i+1}|/2$ we have $|N^-(Y,S_i)|\geq |Y|$. Assuming the opposite,
without loss of generality there exists a subset $X\subseteq S_i$ of
size $|X|\leq |S_i|/2$ for which $N^+(X,S_{i+1})$ is contained in a set $Y$
of size exactly $|X|-1$. We distinguish between two cases:
\begin{enumerate}[(a)]
\item $|X|\leq \log^2n/(2p)$. In this case we have that
$e_D(X,S_{i+1}\setminus Y)=0$, and therefore, using \eqref{eq:alpha_prim} we obtain that 
\begin{align*}
e_D(X \cup Y) & \ge e_D(X,Y) = e_D(X, S_{i+1}) 
 \geq (1/2+\alpha')|X||S_{i+1}|p 
 =\omega(|X|\log^{2.1} n),
\end{align*}
which contradicts \PXEdges{} (here we use the fact that $|S_{i+1}|p=\omega(\log^{2.1} n)$).
\item $\log^{1.1}n/p<|X|\leq |S_i|/2$. In this case we have
\begin{align*}
e_D(X,Y) & \geq (1/2+\alpha')|X||S_{i+1}|p 
 = \frac{1+2\alpha'}{2} |X||S_{i+1}|p 
 > (1+\alpha/2)|X||Y|p,
\end{align*}
which contradicts \PXYEdges{} (here we use the fact that $\alpha' > \alpha/4$).
\end{enumerate}
The same argument can be applied to a subset $Y \subseteq S_{i+1}$ of size $|Y| \le |S_{i+1}|/2$. This completes the proof.
\end{proof}

\section{Proof of the Connecting Lemma}
\label{sec:connecting_lemma}

In  this section we prove Lemma \ref{lemma:connecting-lemma}. The lemma states that if we have a list of $t$ pairs of vertices and a set $K$ of size $\omega(t \ell)$ (where $\ell \geq 10\log n$) which ``behaves" like a  random subset of $V(D)$ then for any $\sigma \in \{+,-\}^{\ell}$ we can connect each pair of vertices via $t$ disjoint  $\sigma$-walks of length $\ell$ by using only vertices from $K$.
The proof is  obtained by adopting a clever argument due to Richard Montgomery \cite{montgomery2014embedding} into the setting of resilience. In order to do so, we had to repeat the whole argument.

\subsection{Expansion properties and $\sigma$-neighborhoods}

We start with a lemma which says that for any two (not too small) sets $X, Y \subseteq V(D)$, such that all vertices $x \in X$ have a large degree in $Y$, $X$ expands to more than a half of vertices in $Y$.
\begin{lemma}
\label{lemma:big-expansion} 
Let $X, Y \subseteq V(D)$ be two (not necessarily
disjoint) subsets such that  $|X| = \lfloor \frac{\log^{2} n}{2p} \rfloor$, $|Y|
\geq \frac{6 \log^{2.1 }n}{\alpha p}$ and for each $x \in X$
$$ d^{\pm}(x, Y) \ge (1 / 2 + \alpha / 2) p |Y|.$$
Then  $|N^+(X, Y) |, |N^-(X, Y) |  \geq (1/2 + \alpha / 20)|Y|$.
\end{lemma}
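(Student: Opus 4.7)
The plan is to prove the out-neighborhood bound $|N^{+}(X, Y)| \ge (1/2 + \alpha/20)|Y|$; the in-neighborhood case follows by reversing all arcs. I argue by contradiction: set $Z := N^{+}(X, Y)$ and assume $|Z| < (1/2 + \alpha/20)|Y|$. Summing the degree hypothesis over $x \in X$ immediately gives the lower bound
\[ e(X, Z) \;=\; e(X, Y) \;\ge\; |X| \cdot (1/2 + \alpha/2)\, p\, |Y|, \]
and the rest of the argument aims to produce a contradicting upper bound on $e(X, Z)$ by combining \PXEdges{} and \PXYEdges{}.

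The key technical point is that $X$ and $Y$ (hence $Z$) need not be disjoint, so \PXYEdges{} cannot be applied to the pair $(X, Z)$ directly. I therefore split $Z = Z_{1} \cup Z_{2}$ with $Z_{1} = Z \cap X$ and $Z_{2} = Z \setminus X$. Every edge counted in $e(X, Z_{1})$ lies inside $X$, and since $|X| = \lfloor \log^{2} n /(2p) \rfloor \le \log^{2} n / p$, property \PXEdges{} gives $e(X, Z_{1}) \le e(X) \le |X| \log^{2.1} n$. The contribution $e(X, Z_{2})$ behaves differently depending on the size of $Z_{2}$, so I case-split on $|Z_{2}|$ at the threshold $\log^{1.1} n / p$.

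In the main case $|Z_{2}| \ge \log^{1.1} n / p$, the sets $X$ and $Z_{2}$ are disjoint and both satisfy the size hypothesis of \PXYEdges{}, yielding $e(X, Z_{2}) \le (1 + \alpha/2)|X||Z_{2}|p$. Substituting into the lower bound, dividing through by $|X|p$, and absorbing the $\log^{2.1} n / p$ term into $\alpha |Y|/6$ via the hypothesis $|Y| \ge 6 \log^{2.1} n / (\alpha p)$ produces
\[ |Z| \;\ge\; \frac{1/2 + \alpha/3}{1 + \alpha/2}\,|Y| \;=\; \Bigl(\tfrac{1}{2} + \tfrac{\alpha}{6(2+\alpha)}\Bigr)|Y|, \]
which strictly exceeds $(1/2 + \alpha/20)|Y|$ as long as $\alpha \le 4/3$ (which is WLOG, since we may decrease $\alpha$). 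In the auxiliary case $|Z_{2}| < \log^{1.1} n / p$, the pair $(X, Z_{2})$ is too small for \PXYEdges{}, but crucially $|X \cup Z_{2}| \le \log^{2} n / p$ still holds, so applying \PXEdges{} to $X \cup Z_{2}$ yields $e(X, Z_{2}) \le e(X \cup Z_{2}) \le \log^{4.1} n / p$. Adding the bound on $e(X, Z_{1})$ gives $e(X, Z) \le \tfrac{3}{2} \log^{4.1} n / p$, whereas the lower bound together with $|Y| \ge 6 \log^{2.1} n / (\alpha p)$ forces $e(X, Z) \ge 3(1+\alpha)\log^{4.1} n / (2 \alpha p)$, which strictly exceeds the upper bound since $(1+\alpha)/\alpha > 1$.

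The main obstacle is precisely this non-disjointness of $X$ and $Y$, together with the possibility that $Z \setminus X$ is too sparse to be accessed by \PXYEdges{}; the rescue is to fold $Z \setminus X$ into $X$ and apply the denser-subgraph bound \PXEdges{} to the enlarged small set. Once the split $Z = Z_{1} \cup Z_{2}$ is in place, the remainder is elementary arithmetic driven by the prescribed sizes of $X$ and $Y$.
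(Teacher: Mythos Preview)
Your proof is correct and follows essentially the same approach as the paper. The paper's $S_X := N^+(X,Y)\setminus X$ is your $Z_2$, the subtraction of $e(X)$ from $e(X,Y)$ in the paper plays exactly the role of your bound $e(X,Z_1)\le e(X)$, and both arguments case-split on the size of this set (your threshold $\log^{1.1} n/p$ versus the paper's $\log^2 n/(2p)$ is immaterial), applying \PXEdges{} to $X\cup Z_2$ in the small case and \PXYEdges{} to $(X,Z_2)$ in the large case; the implicit restriction $\alpha<4/3$ you flag is also present (unstated) in the paper's inequality $(1+\alpha/2)(1/2+\alpha/20)<1/2+\alpha/3$.
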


\begin{proof}
We only prove that $|N^+(X, Y)| \geq (1/2 + \alpha / 20)|Y|$ as  the bound on $|N^-(X, Y)|$ can be proven analogously.
From property \PXEdges{}  we have $e(X ) \leq
|X| \log^{2.1} n$.
Now, denote $S_X := N^+(X, Y) \setminus
X$. Using the previous inequality together with
 $d^{\pm}(x,Y)\geq (1/2 + \alpha / 2)p |Y|$ for every $x \in X$ and  $|Y|p  \geq  6 \log^{2.1 }n / \alpha$, we obtain
 \begin{align}
e(X, S_X) & \geq  e(X, Y) - e(X) 
  \geq (1/2 + \alpha/2 ) p |X| |Y| - |X| \log^{2.1} n 
  \geq (1/2 +  \alpha / 3 ) p |X| |Y| . \label{eq:e_x_sx}
 \end{align}

Now let us assume  $|S_X| < \frac{\log^2 n}{2p}$.  We then know that  $|X \cup S_X| < \frac{\log^2 n}{p}$. Therefore we can apply property
\PXEdges{} to the set $X \cup S_X$ and conclude $e(X \cup S_X)
\leq |X \cup S_X| \log^{2.1} n
\leq 2
|X| \log^{2.1} n$. On the other hand, from Equation
\eqref{eq:e_x_sx} and the bound on the size of $Y$ we have
\begin{align*}
e(X \cup S_X)  \geq  e(X ,S_X) \geq (1/2 + \alpha /3 ) p |X|
|Y| 
  \geq (2 + 3/\alpha) |X| \log^{2.1} n ,
 \end{align*}
 which is a contradiction.

Next, we assume $\frac{\log^2 n}{2p} \leq |S_X | \leq
 (1/2 + \alpha / 20) |Y|$.
 Using property \PXYEdges{} we obtain
 $$e(X,S_X)  \leq (1 + \alpha / 2 ) (1/2 + \alpha/ 20)p |X| |Y|.$$
  Now, combining the previous inequality with \eqref{eq:e_x_sx} we conclude
\begin{align}
(1/2 + \alpha /3 ) p |X| |Y| & \leq e(X,S_X)
 \leq  (1 + \alpha / 2 ) p |X| (1/2 + \alpha /20) |Y|. \label{eq:big_expan}
\end{align}
However, by easy calculation one can check that $(1 + \alpha / 2)(1/2 + \alpha / 20) < (1/2 + \alpha /3)$, thus Equation \eqref{eq:big_expan} gives a contradiction. Therefore we have $|N^+(X, Y)| \ge |S_X| \ge (1/2 + \alpha/20)|Y|$, as required.
\end{proof}

Next, we introduce the notion of $\sigma$-neighborhood.
For given sets $A, B \subseteq V(D)$, integer $\ell$ and  $\sigma \in \{+,-\}^{\ell}$,
we define $N^{\sigma}(A, B)$ as follows,
\begin{align*}
 N^{\sigma}(A, B) := \{ & x \in B \mid \exists a_x \in A \text{ and } 
  \text{a } \sigma\text{-walk } P \text{ connecting } a_x \text{ to } x \text{ and } 
   V(P) \setminus \{a_x\} \subseteq B\}.
 \end{align*}


In the following lemma we show  that for two subsets $X,Y \subseteq V(D)$, such that $X$ and $Y$ have good expansion properties, we can find a vertex $x \in X$ which can reach more than a half of the vertices from $Y$ via $\sigma$-walks.

\begin{lemma}
\label{lemma:expand-1-out-of-many} 
Let $\gamma \in (0,1)$ be a constant and let  $\ell$ be an integer such
that $\ell \geq 2 \log n $. Suppose that $X, Y \subseteq V(D)$ are two disjoint subsets of vertices such that the following holds:
\begin{enumerate}[$(i)$]
    \item $|Y| \geq \frac{6\ell }{\gamma}\cdot \lceil \frac{\log^2 n}{ p} \rceil$,
    \item $|N^+(X, Y)|,|N^-(X, Y)| \geq \frac{2 \log^2 n}{p}$ and 
    \item for every subset $S \subseteq Y $ of size $S \geq \frac{\log^2 n}{p}$ we have
    $$
    |N^+(S, Y)|,|N^-(S, Y)| \geq (1/2 + \gamma) |Y|.
    $$
\end{enumerate}
Then for any $\sigma \in \{+, -\}^{\ell}$ there exists a vertex $x \in X$ such that 
$$|N^{\sigma}(x, Y)| \geq (1/2 + \gamma/2)|Y|.$$
\end{lemma}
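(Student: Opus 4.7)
The plan is to grow the $\sigma$-reachable set iteratively from $X$, using hypothesis~(iii) once the intermediate set surpasses the critical size $\log^2 n / p$, and then to isolate a single $x_0 \in X$ whose individual $\sigma$-reach is still at least $(1/2+\gamma/2)|Y|$. The isolation step, which adapts Montgomery's branching technique to the directed setting, is the main obstacle.

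To start, set $A_1 := N^{\sigma(1)}(X,Y)$. By hypothesis~(ii) we have $|A_1| \geq 2\log^2 n / p$, and iteratively defining $A_i := N^{\sigma(i)}(A_{i-1}, Y)$ for $i = 2, \ldots, \ell$, hypothesis~(iii) applied at each step yields $|A_i| \geq (1/2 + \gamma)|Y|$ as $|A_{i-1}| \geq \log^2 n / p$. The only subtlety is that $\sigma$-walks must have distinct vertices, so a candidate $y$ is discarded if every witness walk $x \to v_1 \to \cdots \to v_{i-1} \to y$ reuses one of its earlier vertices; using the lower bound $|Y| \geq \frac{6\ell}{\gamma}\lceil \log^2 n/p\rceil$ from hypothesis~(i), the accumulated loss over all $\ell$ steps is at most $(\gamma/6)|Y|$, so the bound $|A_i| \geq (1/2 + 5\gamma/6)|Y|$ is maintained throughout.

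Next I isolate a single $x_0 \in X$ via a two-phase approach. In a short initial phase of $k = O(\log n)$ steps, I would choose $x_0 \in X$ (depending on $\sigma$) so that the individual reach $|N^{\sigma^k}(x_0, Y)|$ exceeds $\log^2 n / p$. To find such an $x_0$, perform a ``doubling-or-switch'' procedure using hypothesis~(ii) and Lemma~\ref{lemma:big-expansion}: at each level $i < k$, either $|N^{\sigma^i}(x_0, Y)|$ roughly doubles, or one replaces $x_0$ by another vertex $x_0' \in X$ whose $\sigma^i$-reach still grows; hypothesis~(ii) guarantees such a replacement always exists, and the total number of doublings needed to cross the threshold $\log^2 n / p$ is $O(\log n)$. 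After this initial phase, the second phase mirrors the collective growth above: hypothesis~(iii) applied iteratively keeps $|N^{\sigma^i}(x_0, Y)| \geq (1/2 + \gamma)|Y|$ for $i > k$, losing at most another $(\gamma/6)|Y|$ to vertex-distinctness, and yielding $|N^\sigma(x_0, Y)| \geq (1/2 + \gamma/2)|Y|$, as required.

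The main obstacle is formalizing the doubling-or-switch mechanism in this directed, resilience setting, since both in- and out-neighborhoods enter through the arbitrary sign pattern $\sigma$, and replacements must be certified using only hypotheses~(ii) and~(iii) rather than any stronger global pseudorandomness of $D$.
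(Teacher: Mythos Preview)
Your first paragraph (collective growth of $A_i$) is fine but not needed; the real content is the isolation of a single $x_0$, and there your ``doubling-or-switch'' mechanism has a genuine gap. Below the threshold $\log^2 n/p$ you have \emph{no} expansion hypothesis whatsoever: assumption~(iii) only applies to sets $S$ of size at least $\log^2 n/p$, and Lemma~\ref{lemma:big-expansion} requires $|X| = \lfloor \log^2 n/(2p) \rfloor$, so neither gives you anything about $N^{\sigma^i}(x_0,Y)$ for a single vertex $x_0$ while that set is still small. Hypothesis~(ii) is a statement about the \emph{collective} neighbourhood of all of $X$, so it does not certify a replacement $x_0'$ whose individual $\sigma^i$-reach has grown. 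As stated, your procedure simply cannot get off the ground.

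The paper resolves exactly this obstacle by running the argument in the opposite direction: instead of fixing $x_0$ and growing its reach, it keeps the reach above threshold and shrinks the source set. The key step is: if $A \subseteq X$ satisfies $|N^{\sigma^i}(A,Y)| \ge 2\log^2 n/p$, then splitting $A$ into two halves, at least one half $A'$ has $|N^{\sigma^i}(A',Y)| \ge \log^2 n/p$; now take $H \subseteq N^{\sigma^i}(A',Y)$ of size $\lceil \log^2 n/p \rceil$, apply~(iii) to $H$, and delete the $\le \ell|H|$ vertices already used on the witnessing walks (this is where~(i) is spent) to conclude $|N^{\sigma^{i+1}}(A',Y)| \ge 2\log^2 n/p$. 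Iterating $\ell-2 \ge \log n$ times halves $A$ down to a single vertex $x$ with $|N^{\sigma^{\ell-1}}(x,Y)| \ge 2\log^2 n/p$; one final application of~(iii), again deleting the used walk-vertices, gives $|N^{\sigma}(x,Y)| \ge (1/2+\gamma/2)|Y|$. The point is that the halving never drops the collective reach below the threshold where~(iii) applies, so you are never forced to reason about expansion of sub-threshold sets.
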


\begin{proof}
Recall that $\sigma^i = (\sigma(1), \ldots, \sigma(i))$. We first show that there exists a vertex $x \in X$ such that $|N^{\sigma^{\ell - 1}}(x, Y)| \ge \frac{2 \log^2 n }{p}$. In order to do so, we make use of the following claim.

\begin{claim}
\label{claim:sigmai_ip1} Let $i < \ell$ be an integer and $A \subseteq X$ such that $|N^{\sigma^i}(A, Y)| \geq
\frac{2 \log^2 n}{p}$. Then there exists a subset $A' \subseteq A$
such that $|A'| \leq \lceil |A| / 2 \rceil$ and
$$
|N^{\sigma^{i+1}}(A',  Y)| \geq \frac{2 \log^2 n}{p}.
$$
\end{claim}
By assumption $(ii)$ we have $|N^{\sigma(1)}(X, Y)| \ge 2\log^2n/p$, thus applying Claim \ref{claim:sigmai_ip1} repeatedly $\ell - 2$ times we obtain a set $X' \subseteq X$ such that $|X'| \leq \lceil |X|/2^{\ell - 2} \rceil$ and $|N^{\sigma^i}(X', Y)|
\geq \frac{2\log^2 n}{p}$. Since $|X|\leq n$ and $\ell-2 \geq \log
n$, it follows that $|X| / 2^{\ell - 2} \leq 1$ and therefore $|X'| = 1$. Hence, there exists $x \in X $ such that
$|N^{\sigma^{\ell-1}}(x, Y)| \geq \frac{2 \log^2 n}{p}$.

Let now $M \subseteq N^{\sigma^{\ell - 1}}(x, Y)$ be a subset of size $|M| =
\lceil \frac{\log^2 n}{p} \rceil$ and note that, by definition, for
each $w \in M$ there exists a $\sigma^{\ell - 1}$-walk $P_w$ connecting $x$ to $w$
with $V(P_w)\setminus\{x\} \subseteq Y$. Let $V^* :=
\left(\bigcup_{w \in M} V(P_w)\right)\setminus \{x\}$. Using
assumptions $(ii)$ and $(iii)$ we have
\begin{align*}
|N^{\sigma(\ell)}(M, Y  \setminus V^*) | & \geq (1/2 + \gamma ) |Y| - \ell |M|  
 \geq (1/2 + \gamma/2  )  |Y|.
\end{align*}
Observe that  $N^{\sigma(\ell)}(M, Y \setminus  V^*) \subseteq N^{\sigma}(x, Y)$ and therefore
$|N^{\sigma}(x, Y)| \geq (1/2 + \gamma/2  )  |Y|$.

In order to complete the proof it remains to prove Claim
\ref{claim:sigmai_ip1}.

\begin{proof}[Proof of Claim \ref{claim:sigmai_ip1}]

First, note that there exists a subset $A' \subseteq A$ such that $ |A'| \leq \lceil |A| / 2 \rceil$ and
$|N^{\sigma^i}(A', Y)| \geq \frac{\log^2 n}{p}$.
Indeed, this is true as otherwise taking an arbitrary partition of  the set $A = S \cup T$, such that $|S|, |T| \leq \lceil |A| / 2 \rceil$, yields
$$
|N^{\sigma^i}(A ,Y)| \leq |N^{\sigma^i}(S, Y)|  +  |N^{\sigma^i}(T, Y)|  < \frac{2 \log^2 n}{p},
$$
which contradicts the assumption that $|N^{\sigma^i}(A, Y)| \geq \frac{2 \log^2 n}{p}$.

Let $H \subseteq N^{\sigma^{i}}(A',Y)$ be an arbitrary subset of size  $|H| = \lceil \frac{\log^2 n}{p} \rceil$.
Using  assumption $(iii)$  we have
$
|N^{\sigma(i+1)}(H, Y)| \geq (1/2 + \gamma) |Y|.$
We know that  for each $v \in H$ there exist a $\sigma^i$-walk $P_v$ connecting
a vertex from $A'$ to the vertex $v$. Let us denote $V^* := \cup_{v \in H} V(P_v)$.
Using the upper bound on $i$ we have  $|V^*| \leq \ell |H|$ and thus
$$
|N^{\sigma(i+1)}(H, Y \setminus V^* )| \geq (1/2 + \gamma) |Y| - \ell |H|  \geq
 \frac{2\log^2 n}{p},
$$
where the second inequality follows from assumption $(i)$.
Finally, observe that   $N^{\sigma(i+1)}(H, Y \setminus V^* ) \subseteq N^{\sigma^{i+1}}(A', Y)$ and hence we have
$|N^{\sigma^{i+1}}(A', Y)| \geq \frac{2\log^2 n}p$.
\end{proof}
This completes the proof of the lemma. \end{proof}

\subsection{The proof}

The following lemma is an approximate version of the Connecting Lemma and it is used as the main building block in the proof of the Connecting Lemma. 
 Namely, the lemma states that for a given set of pairs $\{(a_i, b_i)\}_{i = 1}^t$, sets $R_A, R_B$ with good expansion properties we can connect half of the pairs via long $\sigma$-walks using only vertices from $R_A \cup R_B$.

\begin{lemma}
\label{lemma:connect-half-pairs} 
Let $\gamma \in (0,1)$ be a constant, let $\ell$ and $t$ be integers such that $\ell \geq 5 \log n$ and  $t\geq \frac{4\log^2n}p$ and let $\{(a_i, b_i)\}_{i = 1}^t$ be a family of pairs of vertices from $V(D)$ with $a_i \neq a_j$ and $b_i \neq b_j$ for every distinct $i, j \in [t]$.
Furthermore, let $R_A,R_B\subseteq  V(D) \setminus  (\bigcup_{i=1}^t\{a_i, b_i\})$ be disjoint subsets such that the following holds:
\begin{enumerate} [$(i)$]
\item $|R_A|, |R_B| \geq  12 t \ell / \gamma$ and 
\item for $X \in \{A, B\}$ and for every set $S \subseteq  R_A \cup R_B \cup \bigcup_{i=1}^t\{a_i, b_i\}$ of size at least
$\frac{\log^2 n}p$ we have $$|N^+(S,R_X)|,|N^-(S,R_X)|\geq (1/2 + \gamma)|R_X|.$$
\end{enumerate}
Then for any $\sigma \in \{+, -\}^{\ell}$ there exists a subset of indices $\mathcal I \subseteq [t]$ of
size $s:= \lfloor t / 2 \rfloor $ and $s$  internally disjoint
$\sigma$-walks $P_i$ which connect $a_i$ to $b_i$ (where $i \in
\mathcal I$), and such that
$V(P_{i})\setminus\{a_{i},b_{i}\}\subseteq R_A\cup R_B$.
\end{lemma}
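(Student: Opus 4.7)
The plan is to connect each pair $(a_i, b_i)$ via two halves joined by a single ``bridge'' edge. Split $\sigma = \sigma_1 \sigma^* \sigma_2$ with $\sigma_1 = \sigma^{\ell_1}$, $\sigma^* = \sigma(\ell_1+1)$, $\sigma_2 = (\sigma(\ell_1+2), \ldots, \sigma(\ell))$, choosing $\ell_1 \approx \ell/2$ so that both $\ell_1$ and $\ell - \ell_1 - 1$ are at least $2\log n$ (the threshold needed by Lemma~\ref{lemma:expand-1-out-of-many}). For each pair I will look for $u_i \in N^{\sigma_1}(a_i, R_A)$ and $v_i \in N^{\bar\sigma_2}(b_i, R_B)$ joined by a $\sigma^*$-edge; since $R_A \cap R_B = \emptyset$, the two subwalks together with this edge concatenate into a $\sigma$-walk whose internal vertices lie entirely in $R_A \cup R_B$. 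The walks will be built greedily, one at a time, with $R_A^{(j)}, R_B^{(j)}$ denoting the unused portions of $R_A, R_B$ after $j$ steps. Since $j\ell \leq t\ell/2$, assumption (i) implies that at most a $\gamma/24$ fraction has been removed, small enough that the expansion condition (ii) and the hypotheses of Lemma~\ref{lemma:expand-1-out-of-many} continue to hold with $\gamma$ replaced by $\gamma/2$ at every step.

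To execute step $j+1$, I call a remaining pair \emph{usable} if $|N^{\sigma_1}(a_i, R_A^{(j)})| \geq (1/2 + \gamma/4)|R_A^{(j)}|$ and $|N^{\bar\sigma_2}(b_i, R_B^{(j)})| \geq (1/2 + \gamma/4)|R_B^{(j)}|$, and I claim a usable pair always exists. Let $X$ be the set of remaining $a_i$'s violating the first inequality. If $|X| \geq \log^2 n/p$, assumption (ii) gives $|N^\pm(X, R_A^{(j)})| \geq 2\log^2 n/p$, so Lemma~\ref{lemma:expand-1-out-of-many} with this $X$, $Y = R_A^{(j)}$, and sign sequence $\sigma_1$ produces some $a_i \in X$ with $|N^{\sigma_1}(a_i, R_A^{(j)})| \geq (1/2+\gamma/4)|R_A^{(j)}|$, contradicting the definition of $X$. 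Hence fewer than $\log^2 n/p$ remaining $a_i$'s fail the first inequality, and analogously for the $b_i$'s. Since at least $t - j > t/2 \geq 2\log^2 n/p$ pairs remain (using $t \geq 4\log^2 n/p$ and $j < s \leq t/2$), a usable pair is always left.

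Given a usable pair, apply (ii) with $S = W_A := N^{\sigma_1}(a_i, R_A^{(j)})$ (which has size $\geq \log^2 n/p$) to obtain $|N^{\sigma^*}(W_A, R_B^{(j)})| \geq (1/2+\gamma/2)|R_B^{(j)}|$. Intersecting this with $W_B := N^{\bar\sigma_2}(b_i, R_B^{(j)})$ leaves at least $(\gamma/4)|R_B^{(j)}|$ candidates $v_i$; each such $v_i$ comes with some $u_i \in W_A$ joined by a $\sigma^*$-edge. By the definitions of $W_A$ and $W_B$, I then extract a $\sigma_1$-walk from $a_i$ to $u_i$ through $R_A^{(j)}$ and a $\sigma_2$-walk from $v_i$ to $b_i$ through $R_B^{(j)}$, and splice them through the bridge edge to produce the desired $\sigma$-walk. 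Internal disjointness across the $s$ walks is automatic because the walk vertices are removed from $R_A^{(j)}, R_B^{(j)}$ before the next iteration.

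The main obstacle is the parameter bookkeeping: three successive $\gamma$-degradations (vertex removal, Lemma~\ref{lemma:expand-1-out-of-many}, and the intersection step) all need to fit inside the slack provided by hypothesis (i). The $\gamma/24$ removal budget, the $\gamma/4$ threshold in ``usable'', and the $\gamma/2$ margin from the shrunk condition (ii) must be arranged in a consistent order, but this is routine arithmetic and is essentially the only step not already handled by the expansion and connectivity lemmas established above.
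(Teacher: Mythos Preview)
Your proof is correct and follows essentially the same approach as the paper: build the $\sigma$-walks greedily one at a time, split $\sigma$ into two halves plus a bridge symbol, use Lemma~\ref{lemma:expand-1-out-of-many} on the surviving $a_i$'s and $b_i$'s to locate a pair with large $\sigma_1$- and $\bar\sigma_2$-neighborhoods in the leftover sets $R_A', R_B'$, and then intersect across the bridge edge. Your contradiction argument bounding the ``bad'' set of $a_i$'s (those with small $\sigma_1$-neighborhood) is a slightly slicker packaging of the paper's iterative one-vertex-at-a-time extraction in Claim~\ref{claim:halfaregood}, but the underlying idea and all the parameter tracking are the same.
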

\begin{proof}
We prove the existence of set $\mathcal I$ of size $\lfloor t/  2 \rfloor$ and the required $\sigma$-walks by induction.
Assume that there exists $\mathcal I = \{i_1, \ldots, i_{s'}\} \subseteq [t]$ with  $s'< \lfloor t / 2
\rfloor$, and $s'$ vertex-disjoint
$\sigma$-walks $P_i$, connecting $a_i$ to $b_i$, where $i\in
\mathcal I$. Let us define
\begin{gather*}
R'_A :=R_A \setminus \cup_{i \in \mathcal I} V(P_{i}),\quad R'_B := R_B
\setminus \cup_{i \in \mathcal I} V(P_i) 
\quad \text{and} \quad
\mathcal{I}' := [t] \setminus \mathcal I.
\end{gather*}
Next, we show how to find a $\sigma$-walk $P$ connecting some
$a_i$ to $b_i$ where $i \in \mathcal I'$ such that
$V(P)\setminus\{a_i,b_i\}\subseteq R'_A\cup R'_B$. Let $h_A$ and
$h_B$ be two integers such that $h_A,h_B\geq 2\log n$ and
$h_A+h_B+1=\ell$, and consider $\sigma^{h_A}$ and
$\bar{\sigma}^{h_B}$ (recall that
$\sigma^{h_A}=(\sigma(1),\ldots,\sigma(h_A))$ and
$\bar{\sigma}^{h_B}=(\bar\sigma(\ell),\ldots,\bar\sigma(\ell-h_B+1))$). We make use of the following claim.

\begin{claim}\label{claim:halfaregood}
There exists an index $i\in \mathcal I'$ for which the following
holds:
\begin{gather*}
|N^{\sigma^{h_A}}(a_i,R'_A)|\geq (1/2 + \gamma/4)|R'_A|\\ 
\text{and}\quad  \\
|N^{\bar{\sigma}^{h_B}}(b_i,R'_B)|\geq (1/2 + \gamma/4)|R'_B|.
\end{gather*}
\end{claim}

Before we prove this claim we show how to finish the proof of the
lemma. Let $(a_i,b_i)$  be a pair of vertices with index  obtained by Claim
\ref{claim:halfaregood}. For $S:=N^{\sigma^{h_A}}(a_i,R'_A)$
we have that $|S|\geq (1/2 + \gamma/4)|R'_A|$. As $R'$ is obtained by removing vertices of at most $|\mathcal I|$ many 
$\sigma$-walks we know 
$
 |R'_A| \geq |R_A| - t \ell \geq 11 t \ell, 
$
and thus  $|S| \geq \log^2 n / p$.
By assumption $(ii)$ we have that $N^{\sigma(h_A + 1)}(S, R_B) \geq (1/ 2 + \gamma) |R_B|$ and consequently
\begin{align}
N^{\sigma(h_A + 1)}(S, R'_B) & \geq (1/ 2 + \gamma) |R_B| - t \ell
\stackrel{(i)}{\geq} (1/ 2 + \gamma/2 ) |R_B|
 \ge 
(1/ 2 + \gamma/ 2) |R'_B|. \label{eq:half_r'b}
\end{align}
On the other hand we know from Claim \ref{claim:halfaregood} that 
$|N^{\bar{\sigma}^{h_B}}(b_i,R'_B)|\geq
(1/2 + \gamma/4)|R'_B|$. This implies together with Equation \eqref{eq:half_r'b} that there exist $v \in N^{\sigma^{h_A}}(a_i,R'_A)$ and $w \in  N^{\bar{\sigma}^{h_B}}(b_i,R'_B)$ such that 
$$
w \in N^{\sigma(h_A +1)}(v).
$$
Therefore we can construct a $\sigma$-walk $P$ connecting $a_i$ to $b_i$ such that $P$ is vertex disjoint from all previous $\sigma$-walks.
Now it only remains to prove Claim \ref{claim:halfaregood}.

\begin{proof}[Proof of Claim \ref{claim:halfaregood}] 

The idea of the proof is to repeatedly apply Lemma \ref{lemma:expand-1-out-of-many}. First, we apply Lemma \ref{lemma:expand-1-out-of-many}  to $X := \bigcup_{i \in \mathcal I'} \{a_i\}$, $Y := R'_A$, $\gamma / 2$ (as $\gamma$) and obtain a vertex $v_1 \in X$ such that $|N^{\sigma^{h_A}}(a, R'_A)| \geq (1 /2 + \gamma/4) |R'_A|$. Next, we apply the lemma again but now to $X := X \setminus \{v_1\}$ instead (with other parameters unchanged) and obtain  $v_2 \in X \setminus \{v_1\}$. After $k$ steps of this procedure we obtain 
vertices $\{v_1, \ldots, v_k\}$ with the property $|N^{\sigma^{h_A}}(v_i, R'_A)| \geq (1 /2 + \gamma/4) |R'_A|$, for all $1 \leq i \leq k$.

Let us now argue that we can indeed apply Lemma \ref{lemma:expand-1-out-of-many} and, moreover, estimate the number of steps $k$. Note that the condition $(i)$ from Lemma \ref{lemma:expand-1-out-of-many} is satisfied as
$$|R'_A| \geq  12 t \ell / \gamma \ge \frac{20 \log^3 n}{\gamma p}.$$ 
On the other hand, using property $(ii)$ of Lemma \ref{lemma:connect-half-pairs} and the fact that $|R_A| - |R'_A| \leq t \ell$ we get
\begin{align*}
|N^-(S,R'_A)|, |N^+(S,R'_A)| & \geq (1/2 + \gamma) |R_A| - t \ell 
 \geq (1/2 + \gamma/2) |R'_A|,
\end{align*}
for any $S \subseteq R'_A \cup \bigcup_{i \in \mathcal I'}\{a_i\}$ of size at least $\frac{\log^2 n}{p}$.
Therefore, $X = \bigcup_{i \in \mathcal{I}'}\{a_i\} \setminus \{v_1, \ldots, v_i\}$ satisfies assumption $(ii)$ of Lemma \ref{lemma:expand-1-out-of-many} as long as $|\mathcal{I}'| - i > \log^2n / p$. This implies
that we can iterate the process for at least $k \ge |\mathcal I'| / 2  + 1$ steps as $|\mathcal I'| > t / 2 \geq \frac{2 \log^2 n }{p}$. Thus, we obtain $V_A := \{v_1, \ldots, v_{k}\}$ with $v_j \in \{a_i\}_{i \in \mathcal I'}$ and
$$|N^{\sigma^{h_A}}(v_j, R'_A)| \geq (1/2 + \gamma/4) |R'_A|$$
for all $j \in [k]$.

By using the analogous argument with $\{b_i\}_{i \in \mathcal I'} $ and $R'_B$  we  obtain $V_B := \{w_1, \ldots, w_{k}\}$ such that $k > |\mathcal I'| /2 $ and $w_j \in \{b_i\}_{i \in \mathcal I'} $ with the property 
$$|N^{\bar{\sigma}^{h_B}}(w_j, R'_B)| \geq (1/2 + \gamma/4) |R'_B|$$
for all $j \in [k]$. Therefore, there must exist $i \in \mathcal I'$ such that $a_i \in V_A$ and $b_i \in V_B$, as required by the claim.
\end{proof}

This finishes the proof of Lemma \ref{lemma:connect-half-pairs}.
\end{proof}

Before proving Connecting Lemma we need to introduce the following definitions.

\begin{definition}
Let $T$ be a rooted tree with edges oriented arbitrarily. Let
$\mathcal L(T)$ denote the set of leaves of $T$ and let $\sigma\in
\{+,-\}^\ell$ for some integer $\ell$. We say that $T$ is a
\emph{$\sigma$-tree} if for each $v \in \mathcal L(T)$ the unique
path from the root of $T$ to $v$ is a $\sigma$-walk.
\end{definition}

\begin{definition}
Let $\tau \in \{+,-\}$ and let $X,Y \subseteq V(D)$ be two disjoint sets. We say that there is a 
$(2, \tau)$-matching between $X$ and $Y$ that saturates $X$ if
for each $x \in X$ there are two distinct vertices $y^1_x, y^2_x \in Y$ such that $\{y^1_x, y^2_x\} \in N^{\tau}(x)$ and $\{y^1_x, y^2_x\} \cap \{y^1_{x'}, y^2_{x'}\} = \emptyset$ for $x \neq x'$.
\end{definition}

We are finally ready to prove the main lemma of this section.

\begin{proof}[Proof of Lemma \ref{lemma:connecting-lemma}]

Let $\sigma$ be an arbitrary element of $\{+,-\}^{\ell}$ and 
let $\varepsilon>0$ be a sufficiently small constant (to be
determined later). Throughout the proof we make use of the
following parameters:
\begin{align*}
 h &= \max\left\{\frac{6\log^{2.2} n}{p},2t \right\}, \\
 m &= \lceil \log_2 t \rceil +1, \\
 s_i &=h \quad \text{for} \quad 1 \leq i\leq 2m, \\
 s_{2m+1} &=s_{2m+2}=\frac{|K|}{4}, \\
 k&=2m+2.
\end{align*}
Applying Lemma
\ref{lemma:random-partition-good-degrees} to $s_1, s_2, \ldots,s_k$,
$\varepsilon$, $1/2 + \alpha$ (as $\gamma$), $p$, $K$ (as $U$) and $D$ we
obtain disjoint subsets $S_1, \ldots, S_k \subseteq K$, such that
for every $1\leq i\leq k$ the following holds:
\begin{enumerate}[$(a)$]
\item $|S_i|=s_i $, and
\item for every $v\in V(D)$, if  $d_D^{\pm}(v, K) \geq (1/2 + \alpha)
p |K|$, then
\begin{equation}
d_D^{\pm}(v, S_i) \geq (1-\varepsilon)  (1/2 + \alpha) p s_i.
\label{eq:consequence-b}
\end{equation}
\end{enumerate}
Using \eqref{eq:consequence-b}, properties $(ii)$ and $(iii)$ and the fact that $\varepsilon$ is
sufficiently small, we obtain that for any $v \in \{a_i \cup b_i\}_{i =1}^t \cup K$ and for any
set $S_i$ the following holds:
\begin{equation}
d_D^{\pm} (v, S_i) \geq (1/2 + \alpha/2)  p s_i.
\label{eq:good-deg-in-parts}
\end{equation}

For simplicity of presentation, let us denote $A_0:=\bigcup_{i=1}^i \{a_i\}$, $B_0:=\bigcup_{i=1}^i \{b_i\}$,
$A_i:=S_i$ for each $1\leq i\leq m$, $B_i:=S_{m+i}$ for each $1\leq
i\leq m$, $R_A:=S_{2m+1}$ and $R_B:=S_{2m+2}$.

We first describe, informally, the strategy for finding $\sigma$-walks. 
In a first step, we apply Lemma~\ref{lemma:connect-half-pairs} to find $t/2$ $\sigma$-walks between vertices in $A_0$ and $B_0$. Then we find a $(2, \sigma(1))$-matching  between the leftovers in $A_0$ and the vertices in $A_1$ and a $(2, \bar\sigma(\ell))$-matching between the leftovers in $B_0$ and the vertices in $B_1$. Let $A'_1$ denote the set of vertices that are matched to a leftover of $A_0$ and analogously define $B_1'$.
Observe that $|A_1'|=|B_1'|\geq t$ and therefore one can apply Lemma~\ref{lemma:connect-half-pairs} to find $|A'_1|/2$ vertex disjoint $\kappa$-walks between vertices in $A'_1$ and $B'_1$, where $\kappa:=(\sigma(2), \ldots, \sigma(\ell-1))$. Note that extending the walks with the matchings yields $\sigma$-walks between at least $t/4$ leftovers of $A_0$ and the corresponding leftovers of $B_0$. By iteratively continuing this process for roughly $\log t$ steps, we construct all the desired walks.

Before proceeding with the description of the procedure, we define the following invariant which we maintain in every step $0 \leq s \leq m$:
\begin{enumerate}[$(X1)$]
\item $\mathcal I_s \subseteq [t]$ for which
\begin{enumerate}
\item if $s < m $ then $|\mathcal I_s|= t - \lceil t / 2^s \rceil$
\item if $s = m $ then $|\mathcal I_s|= t$
\end{enumerate}
\item $\mathcal P_s = \{P_i\}_{i \in \mathcal I_s}$ is a collection of vertex-disjoint $\sigma$-walks such that $P_i$ connects $a_i$ to $b_i$ and
$V(P_i) \setminus \{a_i, b_i\} \subseteq \left(\bigcup_{k=1}^s (A_k
\cup B_k)\right) \cup R_A \cup R_B$, for all $i \in \mathcal I_s$
\item $\mathcal T_A^s = \{T^i_A \}_{i \in [t] \setminus \mathcal I_s}$ is a collection of $\sigma^s$-trees
and $\mathcal T_B^s = \{T^i_B \}_{i \in [t] \setminus \mathcal I_s}$
is a collection of $\bar \sigma^s$-trees
 such that for each $i \in [t] \setminus \mathcal I_s$
\begin{enumerate}
\item $T_A^i$ is rooted at $a_i$, $\mathcal L(T_A^i) \subseteq A_s$, $|\mathcal L(T_A^i)| = 2^s$ and $V(T_A^i) \setminus \{a_i\} \subseteq \cup_{j=1}^s A_j$
\item $T_B^i$ is rooted at $b_i$, $\mathcal L(T_B^i) \subseteq B_s$, $|\mathcal L(T_B^i)| = 2^s$ and $V(T_B^i) \setminus \{b_i\} \subseteq \cup_{j=1}^s B_j$
\end{enumerate}
\item for any two $T_1, T_2 \in \mathcal T_A^s \cup \mathcal T_B^s$
rooted at $v_1$ and $v_2$ we have $(V(T_1) \setminus v_1) \cap (V(T_2) \setminus v_2) = \emptyset$
\item for any two $T \in \mathcal T_A^s \cup \mathcal T_B^s$ and any $P \in \mathcal P_s$ we have
$V(T) \cap V(P) = \emptyset$
\end{enumerate}

The set $\mathcal I_s$ represents a set of indices of pairs which are connected by a $\sigma$-walk up to step $s$. 
The collection $\mathcal P_s$ contains $\sigma$-walks created up to step $s$ between pairs with indices in $I_s$ and the $\mathcal T^s_A$ and $\mathcal T^s_B$ are collections of trees for each element of a pair  not connected by a $\sigma$-walk up to step $s$.

First, the invariant clearly holds for $ s= 0$, $\mathcal I_0 = \emptyset$, $\mathcal T^0_A = A_0$,
$\mathcal T^0_B = B_0$ and $\mathcal P_0 = \emptyset$.
Suppose that  the invariant holds for some $s$ such that $s < m$, we will show how to extend it to $s+1$.
Denote $A'_s = \bigcup_{T \in \mathcal T_A^s} \mathcal L(T)$ and
$B'_s = \bigcup_{T \in \mathcal T_B^s} \mathcal L(T)$. 
Let $\{a'_i, b'_i\}_{i=1}^r$ be a perfect matching between vertices  of $A'_s$ and $B'_s$ with the following property:
for every $1 \leq i \leq r$ there is $j \in [t]$ such that $a'_i$ and $b'_i$ are leaves of trees rooted at $a_j$ and $b_j$.
Using $(X1)$ and $(X3)$ we obtain that $r = 2^s (t - |\mathcal I_s|) \geq 2^s  \lceil t /2^{s} \rceil \geq t$.
Next, let $R_A' = R_A \setminus \cup_{i \in \mathcal I_s} V(P_i)$
and let $R_B' = R_B \setminus \cup_{i \in \mathcal I_s} V(P_i)$. Using Claim \ref{claim:3} below it follows that for every $X\in\{A,B\}$ and every subset $S\subseteq
K $ such that  $|S| \geq \frac{\log^2 n}p$ we have
\begin{align*}
 |N^+(S,R'_X)|,|N^-(S,R'_X)|\geq (1/2+ \alpha /40)|R'_X|.
\end{align*}
Therefore, we can apply Lemma \ref{lemma:connect-half-pairs} to
$\alpha /40$ (as $\gamma$), the family of pairs $\{(a'_i, b'_i)\}_{i=1}^r$, $R'_A$ (as $R_A$), $R'_B$ (as $R_B$) and
$\kappa :=(\sigma(s+1),\ldots,\sigma(\ell-s))$, and obtain the
following: a set of indices $\mathcal J \subseteq [r]$ of size
$|\mathcal J|= \lfloor r/2 \rfloor = 2^{s-1}(t - |\mathcal I_s|)$ and a collection of vertex-disjoint $\kappa$-walks $W_{i}$, such
that a path $W_{i}$ connects $a'_i$ to
$b'_i$ and $V(W_{i})\setminus \{a'_i,b'_i\}\subseteq R'_A\cup R'_B$, for each $i \in \mathcal J$.

Let us pick a subset $\mathcal J' \subseteq \mathcal J$ of size
 $\lfloor |\mathcal J| /2^s \rfloor = \lfloor (t - \mathcal |I_s|) /2 \rfloor$ such that
each tree from $\mathcal T_B^s \cup \mathcal T_A^s$ has at most one leaf indexed with some $i' \in \mathcal J'$. Note that this is possible since each tree has $2^s$ leaves. For technical reasons, when $s = m  -1$ we pick $\mathcal J'$ of size  $|\mathcal J'| = 1$.
 Now, for any $j \in \mathcal J'$ let $Q^1_j$ be the unique
$\sigma^s$-walk in the tree $T_1 \in \mathcal T^s_A$ containing  $a'_j$ which connects the root of $T_1$ to $a'_j$. Similarly, let $Q^2_j$ be the unique
$\bar \sigma^s$-walk in the tree $T_2 \in \mathcal T^s_B$ containing  $b'_j$ which connects the root of $T_2$ to $b'_j$.
We know that $Q^1_j$ and $Q^2_j$ start in vertices with the same index from $A_0 $ and $B_0$, by the definition of matching between $A'_s$ and $B'_s$. Combining the paths $Q^1_j, Q^2_j$ and $W_j$ we obtain a $\sigma$-walk
  $P_{i_j}$ which connects vertex $a_{i_j}$ to $b_{i_j}$ for some $i_j \in [t] \setminus \mathcal I_s$.
  We define $\mathcal P_{s+1} = \mathcal P_s \cup (\cup_{j \in \mathcal J'} P_{i_j})$ and $\mathcal I_{s+1} = \mathcal I_s \cup \left\{i_j \mid j\in  \mathcal J'\right\}$. Using the fact that $|\mathcal J'| =  \lfloor (t - \mathcal |I_s|) /2 \rfloor$ when $s< m - 1$ we obtain
\begin{align}
   |\mathcal I_{s+1}| =  |\mathcal I_s| + \lfloor (t - |\mathcal I_s|) /2 \rfloor 
   = t - (t -  |\mathcal I_s| - \lfloor (t - |\mathcal I_s|) /2 \rfloor ) ) 
   = t - \lceil (t - |\mathcal I_s|) /2 \rceil \stackrel{(X1)}{=}  t - \lceil t /2^{s+1} \rceil. \label{eq:induc_step_p1}
  \end{align}
If $s = m - 1$ if follows from the invariant that $|\mathcal I_s| = t - 1$ and therefore that
  $|\mathcal I_{s+1}| = t$.
Note that the invariant $(X2)$ holds directly by the construction of the new $\sigma$-walk $P_{i_j}$.

In order to show $(X3)$ let $\mathcal T'_A \subseteq \mathcal T^s_A$ and $\mathcal T'_B \subseteq \mathcal T^s_B$ be the subsets which contain all trees that are rooted at  vertices from
$\cup_{i \in \mathcal I_{s+1}} \{a_i, b_i\}$. For $L_A = \cup_{T \in T'_A} \mathcal L(T)$ and $L_B = \cup_{T \in T'_B} \mathcal L(T)$ it follows from \eqref{eq:induc_step_p1} that $|L_A| = |L_B| = 2^{s}  \cdot \lceil t /2^{s+1} \rceil$.
 Using Claim \ref{claim:2} bellow we conclude that there exist an $L_A$-saturating
 $(2,\sigma(s + 1))$-matching $M_A$ from $L_A$ to $A_{s+1}$, and a $L_B$-saturating
$(2,\bar\sigma(\ell-s))$-matching $M_B$ from $L_B$ to $B_{s+1}$.
For every $\sigma^j$-tree $T \in \mathcal T^s_A$ we denote by $T^+$ the $\sigma^j$-tree obtained by extending $T$ with the arcs of the matching $M_A$ incident to  $\mathcal L(T)$.
Similarly, for every $\bar \sigma^j$-tree $T \in \mathcal T^s_B$ we denote by $T^+$ the $\bar \sigma_{j+1}$ tree obtained by extending $T$ with the arcs of the matching $M_B$ incident to  $\mathcal L(T)$.
Finally, let $\mathcal T^{s+1}_A = \cup_{T \in \mathcal T'^s_A} T^+$ and let
$\mathcal T^{s+1}_B = \cup_{T \in \mathcal T'^s_B} T^+$. It follows from our construction that $T^{s+1}_A$ and $T^{s+1}_B$ satisfy $(X3)$ and $(X4)$.
This completes the proof of the lemma.

\begin{claim}\label{claim:2}
For every $0\leq i\leq m-1$, every $\tau \in \{+, -\}$ and  $X \in \{A, B\}$ the following holds. For every subset $S\subseteq X_i$ of size  $|S| \leq |X_{i+1}| / 8$  there is a $(2, \tau)$-matching
 from $S$ to $X_{i+1}$  that saturates $S$.
\end{claim}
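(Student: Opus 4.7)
The plan is to reduce the existence of a saturating $(2,\tau)$-matching to a standard bipartite matching problem via Hall's theorem. I would introduce an auxiliary bipartite graph with left class $S \times \{1,2\}$ (two copies of each vertex of $S$) and right class $X_{i+1}$, where $(v,j)$ is adjacent to $y$ exactly when $y \in N^{\tau}(v, X_{i+1})$. A left-perfect matching of this graph is precisely a $(2,\tau)$-matching from $S$ to $X_{i+1}$ that saturates $S$. Since the neighborhood of any subset $T' \subseteq S \times \{1,2\}$ depends only on its projection to $S$, Hall's condition reduces to verifying that for every $T \subseteq S$,
$$|N^{\tau}(T, X_{i+1})| \geq 2|T|.$$

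I would check this in two regimes. For the small regime $|T| \leq \log^2 n/(3p)$, I would proceed by contradiction: if $|N^{\tau}(T, X_{i+1})| < 2|T|$, then $W := T \cup N^{\tau}(T, X_{i+1})$ has size less than $3|T| \leq \log^2 n/p$, so property \PXEdges{} bounds $e_D(W) \leq 3|T|\log^{2.1} n$. But every $\tau$-arc from $T$ into $X_{i+1}$ lies inside $W$, and using \eqref{eq:good-deg-in-parts} together with $h \geq 6\log^{2.2} n/p$, the number of such arcs is at least $|T|(1/2+\alpha/2)ph \geq 3(1+\alpha)|T|\log^{2.2} n$. This contradicts the bound from \PXEdges{} for large $n$.

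For the large regime $|T| > \log^2 n/(3p)$ (so in particular $|T| \geq \log^{1.1}n/p$), I would aim to establish the stronger bound $|N^{\tau}(T, X_{i+1})| > h/3$, which is sufficient because $|T| \leq |X_{i+1}|/8 = h/8$ implies $2|T| \leq h/4$. Let $N := N^{\tau}(T, X_{i+1})$. If $|N| \geq \log^{1.1}n/p$ I set $N' := N$; otherwise I pad $N$ to an arbitrary superset $N' \subseteq X_{i+1}$ of size exactly $\lceil \log^{1.1}n/p \rceil$. Then $T$ and $N'$ are disjoint and both of size at least $\log^{1.1}n/p$, so \PXYEdges{} gives $e_D(T, N') \leq (1+\alpha/2)|T||N'|p$ when $\tau = +$ (and the analogous bound $e_D(N', T) \leq (1+\alpha/2)|T||N'|p$ when $\tau = -$). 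Since all $\tau$-arcs from $T$ into $X_{i+1}$ land in $N \subseteq N'$, the same quantity is bounded below by $|T|(1/2+\alpha/2)ph$ via \eqref{eq:good-deg-in-parts}. Rearranging yields $|N'| \geq (1+\alpha)h/(2+\alpha) > h/3$, but $\lceil \log^{1.1}n/p \rceil < h/3$ since $h \geq 6\log^{2.2}n/p$, ruling out the padded case. Hence $N' = N$ and $|N| > h/3 \geq 2|T|$, as needed.

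The main technical obstacle is precisely that $|N|$ may a priori fall below the $\log^{1.1}n/p$ threshold required to invoke \PXYEdges{}; the padding step resolves this by showing that any such small $|N|$ would itself contradict \PXYEdges{}, bridging the gap between the minimum-degree bound on $T$ and the Hall-type requirement on $|N|$.
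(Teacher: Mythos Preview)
Your proof is correct and follows essentially the same approach as the paper: reduce to Hall's condition, then split into a small-regime case handled by \PXEdges{} and a large-regime case handled by \PXYEdges{}, with the degree lower bound \eqref{eq:good-deg-in-parts} supplying the contradicting edge count in each case.

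The one noteworthy difference is your padding step. The paper, in its large-regime case, applies \PXYEdges{} directly to $S'$ and $N^{+}(S',X_{i+1})$ without explicitly checking that $|N^{+}(S',X_{i+1})|\ge \log^{1.1}n/p$; you are more careful here, enlarging $N$ to $N'$ if necessary and then ruling out the padded case a posteriori. This is a genuine (if minor) improvement in rigor. In exchange, your large-regime conclusion $|N|>h/3$ uses the hypothesis $|S|\le |X_{i+1}|/8$ via $2|T|\le h/4<h/3$, whereas the paper uses it (in the weaker form $|X_{i+1}|\ge 4|S'|$) to compare the constants $(1+\alpha/2)\cdot 2$ and $(1/2+\alpha/2)\cdot 4$ directly; these are equivalent ways of closing the argument.
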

\begin{proof}[Proof of Claim \ref{claim:2}]

We prove the existence of a $(2, +)$-matching from $S$ to $X_{i+1}$ that saturates $S$ and the proof for such a
$(2, -)$-matching follows similarly.
Using Hall's Theorem (see e.g. \cite{West}), it is sufficient to
prove that for every $S' \subseteq S$ it holds $|N^{+}(S',
X_{i+1})| \geq 2|S'|$. Assume the existence of a subset $S'
\subseteq S$ that violates Hall's condition, i.e. $|N^{+}(S',
X_{i+1})| < 2|S'|$ .
If $|S'| \leq \frac{\log^2 n}{3p}$
then, since $|S' \cup N^{+}(S', X_{i+1}) | \leq \frac{\log^2 n}{p}$, by property \PXEdges{}
we obtain that
\begin{equation}
e_D(S' \cup N^{+}(S', X_{i+1}))\leq 3|S'| \log^{2.1} n. \label{eq:e_D_bound}
\end{equation}
Moreover, it follows from \eqref{eq:good-deg-in-parts} that
$d^{\pm}(s',X_{i+1}) \geq (1/2 + \alpha /2 ) p |X_{i+1}|$, for all $s' \in S'$. All in all, we get
\begin{align}
e_D(S', N^{+}(S', X_{i+1})) & \geq (1/2 + \alpha/2) p|S'| |X_{i+1}|
 > 3|S'| \log^{2.2}n,
\label{eq:lo_bound_matching}
\end{align}
where the second inequality follow from $|X_{i+1}| \geq \frac{6\log^{2.2}n}{p}$. The last inequality together with \eqref{eq:e_D_bound} leads to a contradiction.

If on the other hand
$|S'| > \frac{\log^2 n}{3p}$ and
 $|N^{+}(S', X_{i+1})| < 2|S'|$,
it follows from \PXYEdges{} and the assumption on $S'$ that
$$
e_D(S', N^{+}(S', X_{i+1})) \leq (1+ \alpha/2) 2 p |S'|^2 . $$
However by \eqref{eq:lo_bound_matching} and the assumption   $|X_{i+1}| \geq 4 |S|$ we have  $e_D(S', N^{+}(S', X_{i+1})) \geq (2 + 2 \alpha) p |S'|^2 $. Therefore, by combining the previous inequalities we obtain
$$
(2+ \alpha)  p |S'|^2  \geq e_D(S', N^{+}(S', X_{i+1})) 
\geq (2 + 2\alpha) p |S'|^2,
$$
which is a contradiction.
\end{proof}

\begin{claim}\label{claim:3}
For every $1 \leq s \leq m -1 $ and every subset
$S\subseteq K$ such that  $|S| \geq \frac{\log^2 n}p$ and $X \in \{A, B\}$ the
following holds:
\begin{align} \label{eq:expansion}
|N^+(S,R'_X)|,|N^-(S,R_X')|\geq (1/2+ \alpha/40)|R'_X|,
\end{align}
where $R'_X := R_X \setminus \cup_{P \in \mathcal P_s} V(P)$.
\end{claim}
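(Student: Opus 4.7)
The plan is to derive Claim \ref{claim:3} from Lemma \ref{lemma:big-expansion} together with the simple observation that only a negligible portion of $R_X$ has been consumed by the $\sigma$-walks constructed so far. First I would bound $|R_X \setminus R'_X|$: since $|\mathcal P_s| = |\mathcal I_s| \leq t$ and each $\sigma$-walk in $\mathcal P_s$ uses $\ell+1$ vertices, at most $t(\ell+1) \leq 2t\ell$ vertices have been removed from $R_X$. Combining $|R_X| = |K|/4$ with the hypothesis $|K| = \omega(t\ell)$ of Lemma \ref{lemma:connecting-lemma} then gives $|R_X \setminus R'_X| = o(|R_X|)$; in particular $|R'_X| = (1-o(1))|R_X|$.

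Next I would establish $|N^+(S, R_X)| \geq (1/2 + \alpha/20)|R_X|$ via Lemma \ref{lemma:big-expansion}. I choose an arbitrary subset $S_0 \subseteq S$ of size $\lfloor \log^2 n /(2p) \rfloor$, which exists because $|S| \geq \log^2 n / p$. Every $v \in S_0 \subseteq K$ satisfies $d^{\pm}(v, R_X) \geq (1/2 + \alpha/2)\, p\, |R_X|$ by \eqref{eq:good-deg-in-parts}, and $|R_X| = \omega(\log^3 n/p) \geq 6 \log^{2.1} n /(\alpha p)$ is comfortably large enough. Lemma \ref{lemma:big-expansion} applied with $S_0$ as $X$ and $R_X$ as $Y$ then yields
$$|N^+(S_0, R_X)|,\ |N^-(S_0, R_X)| \geq (1/2 + \alpha/20)|R_X|,$$
and the same bound transfers to $S$ because $S_0 \subseteq S$.

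Combining the two steps,
\begin{align*}
|N^+(S, R'_X)| &\geq |N^+(S, R_X)| - |R_X \setminus R'_X| \\
&\geq (1/2 + \alpha/20)|R_X| - o(|R_X|) \\
&\geq (1/2 + \alpha/40)|R'_X|,
\end{align*}
for $n$ sufficiently large (the last inequality uses $|R'_X| \leq |R_X|$); the bound on $|N^-(S, R'_X)|$ follows from the same argument applied to in-neighborhoods. The only point requiring care, and the main obstacle to watch out for, is that the vertices already consumed by the previously-built $\sigma$-walks form a negligible fraction of $R_X$: this is precisely what the quantitative assumption $|K| = \omega(t\ell)$ in Lemma \ref{lemma:connecting-lemma} guarantees, and with that in hand the remainder of the argument is essentially a ``robust'' restatement of the expansion property of $R_X$ that held before any walks were built.
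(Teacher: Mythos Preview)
Your proof is correct and follows essentially the same approach as the paper: apply Lemma~\ref{lemma:big-expansion} (using \eqref{eq:good-deg-in-parts} to verify the degree hypothesis) to obtain expansion into $R_X$, then subtract the $o(|R_X|)$ vertices consumed by the walks in $\mathcal P_s$ via $|R_X|=|K|/4=\omega(t\ell)$. Your explicit passage to a subset $S_0$ of size exactly $\lfloor \log^2 n/(2p)\rfloor$ is in fact slightly more careful than the paper, which applies the lemma directly to $S$.
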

\begin{proof}[Proof of Claim \ref{claim:3}]

We prove the claim for $|N^+(S, R'_X)|$ as the proof for $|N^-(S, R'_X)|$ follows analogously.
By Equation \eqref{eq:good-deg-in-parts} we know that for $X \in \{A, B\}$, for any $v \in \{a_i, b_i\}_{i=1}^t \cup K$ the following holds at each step $s$:
$$
d^{\pm}(v, R_X) \geq (1/2 +  \alpha/2) p |R_X|.
$$
Applying Lemma \ref{lemma:big-expansion} to $S$ (as $X$),
$R_X$ (as $Y$) we get
$
|N^+(S, R_X)| \geq (1/2 + \alpha / 20) |R_X|.
$
Since $|\cup_{P \in \mathcal P_s} V(P)| \leq t \ell$  and $|R_X|=  \omega(t \ell)$ we conclude that
\begin{align*}
|N^+(S,R'_X)| & \geq (1/2+ \alpha / 20)|R_X| - t \ell 
 \geq (1/2+ \alpha /40)|R'_X|.
\end{align*}
\end{proof}
 \end{proof}

\section{Proof of the Absorbing Lemma}
\label{sec:absorbing_lemma}

In this section we prove Lemma \ref{lemma:absorbing}.
 A main ingredient in our proof is the concept of an \emph{absorber}. Roughly speaking,
in our setting (that is, finding a directed Hamilton cycle in a
digraph $D$) an absorber $A_x$ for a vertex $x$ is a digraph which contains $x$ and two designated vertices
$s_x$ and $t_x$, such that $A_x$ contains two $s_xt_x$-paths: one
which consists of all vertices in $V(A_x)$ and the other which consists of all
vertices in $V(A_x)\setminus\{x\}$. 
\begin{definition}
\label{def:absorber} Let $\ell_x$ be an integer and $A_x$ a digraph of size $\ell_x + 1$. Then for some distinct verties $x, s_x, t_x \in V(A_x)$, the digraph $A_x$ is called an
\emph{absorber} for a vertex $x$ with \emph{starting} point $s_x$
and a \emph{terminal} point $t_x$, if it contains an $s_xt_x$-path
$P_x$, referred to as the \emph{non-absorbing} path, of length
$\ell_x - 1$ that does not contain $x$, and an $s_xt_x$-path $P_x'$ of
length $\ell_x$ which is referred to as the \emph{absorbing} path.
\end{definition}

\begin{figure}[h]
\centering
\begin{tikzpicture}[%
  >=stealth,
  shorten >=1pt,
  shorten <=1pt
]
\tikzstyle{vertex} = [circle, fill, inner sep=0pt, minimum size=5pt]
\tikzstyle{edge} = [->]
\tikzstyle{edged} = [->,dashed]
\tikzstyle{top} = [bend left=40]
\tikzstyle{bot} = [bend right=40]
\node (v1) [vertex, label=$x_s$] at (0,0) {};
\node (v2) [vertex, label=$x$] at (1,0) {};
\node (v3) [vertex, label=below:$s^x_1$] at (2,0) {};
\node (v4) [vertex, label=below:$t^x_1$] at (3,0) {};
\node (v5) [vertex, label=below:$s^x_2$] at (4,0) {};
\node (v6) [vertex, label=below:$t^x_2$] at (5,0) {};
\node (v7) [vertex] at (6,0) {};
\node (v8) [vertex] at (7,0) {};
\node (v9) [vertex, label=below:$s^x_i$] at (8,0) {};
\node (v10) [vertex, label=below:$t^x_i$] at (9,0) {};
\node (v11) [vertex] at (10,0) {};
\node (v12) [vertex] at (11,0) {};
\node (v13) [vertex, label=below:$s^x_{2k}$] at (12,0) {};
\node (v14) [vertex, label=below:$t^x_{2k}$] at (13,0) {};
\node (v15) [vertex, label=$x_t$] at (14,0) {};

\draw[edge] (v1) edge [bend right=50] (v5);
\draw[edge] (v6) edge [bend right=50] (v9);
\draw[edge] (v10) edge [bend right=50] (v13);

\draw[edge] (v4) edge [top] (v7);
\draw[edge] (v8) edge [top] (v11);
\draw[edge] (v12) edge [top] (v15);

\draw[edge] (v14) edge [bend right=30] (v3);

\draw[edge] (v1) -- (v2);
\draw[edge] (v2) -- (v3);
\draw[edged] (v3) -- (v4);
\draw[edge] (v4) -- (v5);
\draw[edged] (v5) -- (v6);
\draw[edge] (v6) -- (v7);
\draw[edged] (v7) -- (v8);
\draw[edge] (v8) -- (v9);
\draw[edged] (v9) -- (v10);
\draw[edge] (v10) -- (v11);
\draw[edged] (v11) -- (v12);
\draw[edge] (v12) -- (v13);
\draw[edged] (v13) -- (v14);
\draw[edge] (v14) -- (v15);

\draw ($(v8)+(-0.5,0.95)$) rectangle ($(v11)+(0.5,-0.9)$);
\end{tikzpicture}
\caption{The absorber for $k=3$. The cycle $C$ is drawn with solid arrows. The dashed arrows represent directed paths of arbitrary length. The part inside the rectangle can be repeated to obtain absorbers for larger $k$.}
\label{fig:absorber}
\end{figure}
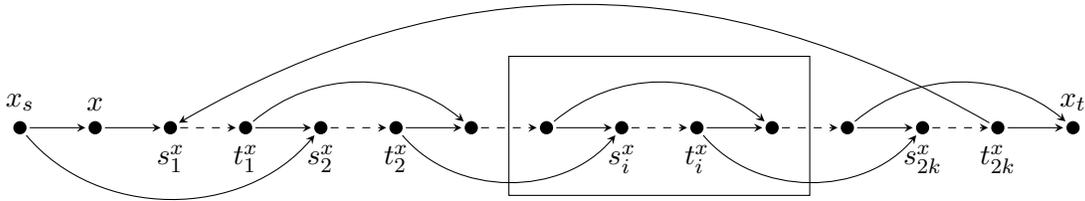

In the following lemma we describe the structure of our absorber.
\begin{lemma}
\label{lemma:absorbers_ax} Let $k$ and $\ell$ be integers and consider a digraph $A_x$ of size
$3+2k(\ell+1)$ constructed as follows:
\begin{enumerate}[$(i)$]
\item $A_x$ consists of a cycle $C$ of length $4k+3$ with an orientation of the edges and labeling of the vertices as shown in Figure \ref{fig:absorber}, and
\item $A_x$ 
contains $2k$ pairwise disjoint directed $s^x_it^x_i$-paths $P_i$ (for each $i \in [2k]$), each of which is of length $\ell$.
\end{enumerate}
Then $A_x$ is an absorber for the vertex $x$.
\end{lemma}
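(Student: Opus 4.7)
The plan is to instantiate Definition \ref{def:absorber} with $s_x:=x_s$, $t_x:=x_t$, and $\ell_x:=2k(\ell+1)+2$. Since $|V(A_x)|=3+2k(\ell+1)=\ell_x+1$, an $s_x t_x$-path of length $\ell_x$ is precisely a Hamilton path of $A_x$, and an $s_x t_x$-path of length $\ell_x-1$ is precisely a Hamilton path of $A_x\setminus\{x\}$. It therefore suffices to exhibit one of each, which I do by reading two different traversals off of Figure \ref{fig:absorber}.

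For the absorbing path $P_x'$, I traverse the gadget ``left to right'' along the axis of the figure:
\[
x_s\to x\to s_1^x\xrightarrow{P_1}t_1^x\to s_2^x\xrightarrow{P_2}t_2^x\to\cdots\to s_{2k}^x\xrightarrow{P_{2k}}t_{2k}^x\to x_t,
\]
where $\xrightarrow{P_i}$ denotes the traversal of the directed path $P_i$ from $s_i^x$ to $t_i^x$. The arcs used are the horizontal solid axis arcs $x_s\to x$, $x\to s_1^x$, $t_i^x\to s_{i+1}^x$ for $i\in[2k-1]$, and $t_{2k}^x\to x_t$, together with all $2k$ paths $P_i$; all are present in $A_x$ by construction. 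Every vertex is visited exactly once, and the length is $(2k+2)+2k\ell=2k(\ell+1)+2=\ell_x$, as required.

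For the non-absorbing path $P_x$, I detour around $x$ using the bent ``skip'' arcs and the wrap-around arc. I first traverse the even-indexed pairs along the bottom bent arcs, then cross over by the wrap, and then traverse the odd-indexed pairs along the top bent arcs:
\[
x_s\to s_2^x\xrightarrow{P_2}t_2^x\to s_4^x\xrightarrow{P_4}\cdots\to s_{2k}^x\xrightarrow{P_{2k}}t_{2k}^x\to s_1^x\xrightarrow{P_1}t_1^x\to s_3^x\xrightarrow{P_3}\cdots\to s_{2k-1}^x\xrightarrow{P_{2k-1}}t_{2k-1}^x\to x_t.
\]
The arcs used are: the $k$ bottom bent arcs $x_s\to s_2^x$ and $t_{2j}^x\to s_{2j+2}^x$ for $j\in[k-1]$; the wrap arc $t_{2k}^x\to s_1^x$; the $k$ top bent arcs $t_{2j-1}^x\to s_{2j+1}^x$ for $j\in[k-1]$ and $t_{2k-1}^x\to x_t$; and all $2k$ paths $P_i$. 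All of these are present in Figure \ref{fig:absorber} (the schematic's ``repeatable rectangle'' precisely encodes the general pattern). Every $s_i^x$, every $t_i^x$, every internal vertex of every $P_i$, and the two endpoints $x_s,x_t$ are visited exactly once, while $x$ is not visited at all; the length is $(2k+1)+2k\ell=2k(\ell+1)+1=\ell_x-1$, as required.

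Both $P_x'$ and $P_x$ are $s_x t_x$-paths in $A_x$ of the required lengths, so $A_x$ is an absorber for $x$ with starting point $x_s$ and terminal point $x_t$. The argument is purely a matter of bookkeeping from the figure; the only ``obstacle'' is choosing the correct combination of bent, wrap, and axis arcs so that the detour around $x$ still forms a Hamilton path of $A_x\setminus\{x\}$, which is handled by the even/wrap/odd strategy above.
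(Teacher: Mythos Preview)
Your proof is correct and follows exactly the same approach as the paper: the absorbing and non-absorbing paths you write down are verbatim the ones given there. Your version is in fact more detailed, since you explicitly verify the lengths $\ell_x$ and $\ell_x-1$ and enumerate which arcs from Figure~\ref{fig:absorber} are used, whereas the paper simply states the two paths and leaves the verification to the reader.
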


\begin{proof}
It is easy to see that
$$ P_x' := x_s, x, s_1^x, P_1, t_1^x, s_2^x, P_2 \ldots, t_{2k}^x, x_t$$
is an absorbing path. On the other hand, the path
$$
P_x := x_s, s_2^x, P_2, t_2^x, s_4^x, P_4, \ldots, s_{2k}^x, t_{2k}^x,
s_1^x, P_1, t_1^x, s_3^x, P_3, \ldots, t_{2k-1}^x, x_t
$$
uses all vertices except $x$, thus it is a non-absorbing path. We refer the reader to Figure \ref{fig:absorber} for clarification.
\end{proof}

The proof of the Absorbing Lemma consists of two main steps.
First, we show how to constuct an absorber $A_x$ for each $x \in V_1$ such that the non-absorbing path of $A_x$ is contained in $V_2 \cup V_3$ and $V(A_x) \cap V(A_x') = \emptyset$ for $x \neq x'$.
Second, using Lemma \ref{lemma:connecting-lemma} we connect non-absorbing paths of each absorber into one long path using vertices from $V_4$.

We build the absorbers $A_x$ in $D$ by first finding the cycle of
the absorber and then connecting all the designated pairs of
vertices via directed paths. To do so we use Lemma \ref{lemma:connecting-lemma} (note that a cycle is a
$\sigma$-walk, for some $\sigma$). 

\begin{proof}[Proof of Lemma \ref{lemma:absorbing}]

Let $k:=3\lceil \log n\rceil$ and let $A_x$ be an absorber given by Lemma \ref{lemma:absorbers_ax}. Recall that $A_x$ contains a cycle $C_x$ of length $4k+3=12\lceil \log n\rceil+3$ with a prescribed orientation
$\sigma$ and
$2k=6\lceil \log n\rceil$ disjoint directed paths $P_1, \dots, P_n$, of length
$\ell:=10\lceil \log n\rceil$, connecting the pairs of designated
vertices $(s_1^x, t_1^x), \dots, (s_{2k}^x, t_{2k}^x)$ on the cycle.
In order to find for each $x\in V_1$ such a cycle, we apply Lemma~\ref{lemma:connecting-lemma} to the 
set $V_2$ (as $K$), $\ell=12\lceil \log n\rceil+3$, $t = |V_1|$ and a family of pairs 
$\{(x,x)\}_{x \in V_1}$
and thereby obtain for every $x\in V_1$ 
a $\sigma$-walk of length $4k+3$ from $x$ to itself, which is  a
cycle $C_x$ as required for the absorber. Moreover, all obtained cycles
$\left\{C_x\right\}_{x\in V_1}$ are disjoint and contain
(apart from the absorbing vertices) only vertices in $V_2$. 
Note that we can apply Lemma \ref{lemma:connecting-lemma} as $V_2 = \omega(  |V_1| \log n)$, $t = |V_1| \geq \frac{4 \log^2 n}{p}$ and 
by property \PMinDeg{} we have that $(ii)$ and $(iii)$ from Lemma \ref{lemma:connecting-lemma} are true.

Next, using the vertices in $V_3$, for each $x\in V_1$ we connect
the pair of designated vertices $(s_i^x, t_i^x)$ on a cycle $C_x$
by a directed path. For this aim we apply
Lemma~\ref{lemma:connecting-lemma} to $V_3$ (as $K$) with $\ell:=10\lceil
\log n \rceil$, $t = 2k |V_1|$,
and $ \{ (s_i^x, t_i^x) \mid x \in V_1, 1 \le i \le 2k \}$
to find all the required paths to complete the absorbers. We can indeed do that as 
$|V_3| = \omega( |V_1| \log^2 n)$, $t = 2k |V_1| \geq \frac{4 \log^2 n}{p}$ and 
by property \PMinDeg{} we have that $(ii)$ and $(iii)$ from Lemma \ref{lemma:connecting-lemma} are true.
 
Finally, we build a directed path which contains all the non-absorbing
paths of the absorbers. To do so, recall that by
Definition~\ref{def:absorber} every absorber $A_x$ has a start
vertex $s_x$ and a terminal vertex $t_x$. Let us arbitrarily  enumerate vertices from $V_1$ as $V_1 = \{x_1, \ldots, x_h\}$, where $h = |V_1|$.
 Apply Lemma~\ref{lemma:connecting-lemma} to $V_4$ (as $K$), $\ell=10\lceil \log n\rceil$, $t = |V_1| - 1$ and 
a family of pairs $ \{(t_{x_i}, s_{x_{i+1}})\}_{i \in [h-1]}$
to find the required paths of length $\ell$ that connect all non-absorbing paths of the absorbers into one directed path $P^*$.
Again, we are allowed to apply the lemma as $|V_4| = \omega( |V_1| \log n)$ and 
by property \PMinDeg{} we have that $(ii)$ and $(iii)$ from Lemma \ref{lemma:connecting-lemma} are true.

It is now easy to see that the path $P^*$ has the required properties. Let $W \subseteq V_1$ be an arbitrary subset of $V_1$ and let $\{A_w \mid w \in W\}$ be the set of absorbers for vertices in $W$. By the definition of absorber for each $A_w$ there is an absorbing path starting and ending at the same vertices as the non-absorbing path, but which contains vertex $w$ as well. By replacing non-absorbing paths of $\{A_w \mid w \in W\}$ in $P^*$ with corresponding absorbing paths we obtain a path $P^*_W$ which has the same endpoints as $P^*$ and $V(P^*_W) = V(P^*) \cup W$. 
\end{proof}


\bibliographystyle{abbrv}
\bibliography{ResilienceDnp}

\end{document}